\newtheorem{theorem}{Theorem}[section]
\newtheorem{corollary}[theorem]{Corollary}
\newtheorem{lemma}[theorem]{Lemma}
\newtheorem{conjecture}[theorem]{Conjecture}
\theoremstyle{definition}
\newtheorem{definition}[theorem]{Definition}
\DeclareMathOperator{\lcm}{lcm}
\title{On the densities of covering numbers and abundant numbers}
\author{Nathan McNew}
\email{nmcnew@towson.edu}
\address{Department of Mathematics, Towson University, Towson, MD 21286}
\author{Jai Setty}
\email{settyjai@gmail.com}
\address{Department of Mathematics, New York University, New York, NY 10012}
\renewcommand{\baselinestretch}{0.995}
\begin{document}

\begin{abstract}
We investigate the densities of the sets of abundant numbers and of covering numbers, integers $n$ for which there exists a distinct covering system where every modulus divides $n$. We establish that the set $\mathcal{C}$ of covering numbers possesses a natural density $d(\mathcal{C})$ and prove that $0.103230 < d(\mathcal{C}) < 0.103398.$ Our approach adapts methods developed by Behrend and Del\'eglise for bounding the density of abundant numbers, by introducing a function $c(n)$ that measures how close an integer $n$ is to being a covering number with the property that $c(n) \leq h(n) = \sigma(n)/n$.  However, computing $d(\mathcal{C})$ to three decimal digits requires some new ideas to simplify the computations. As a byproduct of our methods, we obtain significantly improved bounds for $d(\mathcal{A})$, the density of abundant numbers, namely $0.247619608 < d(\mathcal{A}) < 0.247619658$. We also show the count of primitive covering numbers up to $x$ is $O\left( x\exp\left(\left(-\tfrac{1}{2\sqrt{\log 2}} + \epsilon\right)\sqrt{\log x} \log \log x\right)\right)$,
which is substantially smaller than the corresponding bound for primitive abundant numbers. 
\end{abstract}
\maketitle

\section{Introduction}

A finite collection of arithmetic progressions is a \textit{covering system} if every integer is contained in at least one of its progressions.  More formally, a covering system is a set of ordered pairs $(a_i,m_i)$ of positive integers where every integer satisfies at least one of the congruences
\[
n \equiv a_i \!\!\!\!\pmod{m_i}.
\]
A covering system is \textit{distinct} if each modulus is used at most once.  For example, one can easily check that 
\begin{equation}
    \{0 \!\!\!\!\pmod{2}, \ 0 \!\!\!\!\pmod{3}, \ 1 \!\!\!\!\pmod{4}, \ 1 \!\!\!\!\pmod{6}, \ 11 \!\!\!\!\pmod{12} \} \label{eq:12ex}
\end{equation} 
form a distinct covering system. 

An integer $n$ is a \textit{covering number} if there exists a distinct covering system where every modulus is a divisor of $n$ that is greater than 1.  The example in \eqref{eq:12ex} shows that 12 is a covering number, and, in fact, 12 is the least positive covering number.  It follows from the definition that any multiple of a covering number is also a covering number, so we are especially interested in those covering numbers which are not multiples of smaller covering numbers. We call $n$ a \textit{primitive covering number} when $n$ is a covering number but is not divisible by any smaller covering number.  

This terminology is highly reminiscent of abundant numbers, which, as we will see, are closely related to the covering numbers.  A number $n$ is \textit{abundant} if $\sigma(n)>2n$ (where $\sigma$ denotes the sum of divisors of $n$).  Since $\sigma(n)$ is multiplicative, $\sigma(p^n)/p^n > \sigma(p^m)/p^m$ whenever $n>m$ for any prime $p$, and $\sigma(p)/p>1$, every multiple of an abundant number is also abundant. So an integer $n$ is called \textit{primitive abundant} if $\sigma(n)\geq 2n$ but no proper divisor of $n$ is abundant.\footnote{Note, this definition includes the perfect numbers, those $n$ with $\sigma(n)=2n$ among the primitive abundant numbers.  Sometimes these numbers are more properly referred to as \textit{primitive non-deficient numbers}; however, for historical reasons we will use the term primitive abundant numbers.}

\subsection{Notation}
We denote by $D(n)$ the set of divisors of an integer $n$, by $\tau(n)=|D(n)|$ their count, by $\omega(n)$ the number of distinct prime factors and by $\Omega(n)$ the total number counted with multiplicity.  We write $P^+(n)$ and $P^-(n)$ for the largest and smallest prime factors of $n$ respectively.  For convenience we define $P^+(1)=1$ and $P^-(1)=\infty$.  An integer is $y$-smooth if $P^+(n)\leq y$ and $y$-rough if $P^-(n)\geq y$.  Throughout $p$ and $q$ will denote primes,  $p_1,p_2,\ldots$ will be used to denote the specific prime numbers that divide an integer $n$, while $q_1, q_2,
\ldots$ will represent the increasing sequence of all prime numbers.

We let  $\sigma(n) = \sum_{d|n} d$ be the usual sum of divisors function and let $h(n) = \frac 1n \sigma(n)$ be the ``abundancy index'' of the integer $n$.  We will denote the natural density of a set $\mathcal{S}$ (when it exists) by $d(\mathcal{S})$, the sets of abundant and covering numbers will be denoted by $\mathcal{A}$ and $\mathcal{C}$, and the set of primitive members of each will be denoted $\mathcal{P}_{\mathcal A}$ and $\mathcal{P}_{\mathcal C}$ respectively.  The counting function of a set $\mathcal{S}$ will be denoted $\mathcal{S}(x)$.

\subsection{Background}
Both covering systems and covering numbers have a rich history with famous problems, many of which have seen significant progress in recent years, see for example \cite[(F13)]{guy2004} for some history, and \cite{Balister24} for a survey of some of the recent developments.

Covering numbers were first defined by Haight \cite{Haight} in 1979 in a paper answering a question of Erd\H{o}s, who asked (in the language of covering numbers) whether there was a threshold $c$ such that if $h(n) = \frac{\sigma(n)}{n}>c $ then $n$ is guaranteed to be a covering number.  Haight showed this is not the case by exhibiting an infinite sequence of numbers $n_i$, none of which are covering, with $\frac{\sigma(n_i)}{n_i} \to \infty$.  Filaseta, Ford, Konyagin, Pomerance and Yu \cite{FFKPY} gave a short proof of Haight's theorem and also answered several related conjectures of Erd\H{o}s and Graham. 

Sun \cite{sun07} initiated a systematic investigation of covering numbers, proving there are infinitely many primitive covering numbers, and giving the following sufficient conditions (restated slightly) for an integer $n$ to be a primitive covering number.
\begin{theorem}[Sun] \label{thm:sunPrimitive}
Suppose an even integer $n$ factors as $n=p_1^{\alpha_1}p_2^{\alpha_2}\cdots p_k^{\alpha_k}p_{k+1}$ with $k\geq 1$, $\alpha_i\geq 1$ for each $1\leq i \leq k$ and $2=p_1<p_2<\cdots<p_k<p_{k+1}$.  If these factors satisfy
\begin{enumerate}
    \item $p_{i+1} = \tau(p_1^{\alpha_1}\cdots p_i^{\alpha_i})+1$ \,for each $1\leq i< k$,
    \item $p_{k+1}\leq \tau(p_1^{\alpha_1}p_2^{\alpha_2}\cdots p_k^{\alpha_k})$, and 
    \item $p_{k+1}>(p_k-2)(p_k-3)$,
\end{enumerate}
then $n$ is a primitive covering number.
\end{theorem}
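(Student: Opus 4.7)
The plan is to verify two claims separately: first, that $n$ admits a distinct covering system whose moduli are all divisors of $n$; second, that no proper divisor of $n$ is itself a covering number.

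For the first claim, I would construct the covering inductively on the small primes. Writing $N_i = p_1^{\alpha_1} \cdots p_i^{\alpha_i}$, at each stage I use divisors of $N_i \cdot p_{i+1}$ of the form $d \cdot p_{i+1}$ with $d \mid N_i$, assigning a distinct residue modulo $p_{i+1}$ to each, to cover $p_{i+1} - 1$ of the $p_{i+1}$ residue classes modulo $p_{i+1}$. The key role of condition (1), $p_{i+1} = \tau(N_i) + 1$, is that the $p_{i+1} - 1 = \tau(N_i)$ divisors of $N_i$ are exactly enough for this assignment, leaving one residue class modulo $p_{i+1}$ to be absorbed into the next round of the recursion. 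Condition (2), $p_{k+1} \leq \tau(N_k)$, then provides enough divisors of $N_k$ at the final step to close the construction by covering the remaining residue classes modulo $p_{k+1}$.

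For primitivity, I would argue by contradiction: suppose some proper divisor $m \mid n$ with $m < n$ were itself a covering number. Then $m$ is obtained from $n$ either by deleting the prime $p_{k+1}$ (so $m \mid N_k$) or by reducing one of the exponents $\alpha_i$ by at least one. In either situation the pool of divisors available to serve as moduli shrinks, and a counting argument, tracking how many divisors remain that are compatible with each residue class modulo the largest prime of $m$, would show that no distinct covering system can be assembled. Condition (3), $p_{k+1} > (p_k - 2)(p_k - 3)$, is the precise inequality needed to rule out the tightest borderline cases, where the removed divisors might otherwise be replaceable by a clever choice of residues for the remaining moduli.

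I expect the covering construction to be intricate in notation but essentially mechanical once the recursive step is set up. The main obstacle will be the primitivity half: for each shape of proper divisor $m \mid n$, one must carefully enumerate which divisors of $m$ can serve as moduli and then establish, through a combinatorial exclusion argument invoking condition (3) in the tightest cases, that every candidate distinct covering leaves some residue class uncovered.
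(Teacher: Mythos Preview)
The paper does not prove this theorem directly---it is quoted as Sun's result---but it does prove a strengthening (Theorem~\ref{thm:strongersun}) with a weaker condition~(3), and that proof is what you should compare against. Your covering half is essentially the paper's route: the paper packages your inductive step as Theorem~\ref{thm:sunalmostcover}, showing that $N_k=p_1^{\alpha_1}\cdots p_k^{\alpha_k}$ is an \emph{almost-covering number} (all but one residue class modulo $N_k$ covered), and then closes the cover via Corollary~\ref{cor:covfromalmostcov} using condition~(2), exactly as you outline.

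Where you diverge is in the primitivity half, and here your plan is workable but misses the shortcut. You propose to take each shape of proper divisor $m\mid n$ and argue directly, by a ``combinatorial exclusion argument,'' that $m$ admits no distinct covering. The paper instead isolates a single clean necessary condition, Lemma~\ref{lem:largestprime}: any primitive covering number $m$ must satisfy $P^+(m)\le\tau\!\bigl(m/P^+(m)\bigr)$. With that lemma the primitivity argument becomes three lines: if some proper divisor of $n$ were covering, pass to a primitive covering divisor $d$; necessarily $p_{k+1}\mid d$ (otherwise $d\mid N_k$, which is only almost-covering), so some $p_i$ with $i\le k$ appears to a lower power in $d$ than in $n$, whence $\tau(d/p_{k+1})\le\tau\!\bigl(n/(p_ip_{k+1})\bigr)<p_{k+1}$, contradicting the lemma. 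Your phrase ``tracking how many divisors remain that are compatible with each residue class modulo the largest prime of $m$'' is precisely the content of Lemma~\ref{lem:largestprime}, but by not abstracting it into a standalone necessary condition you commit yourself to a case analysis over all shapes of $m$ that the paper avoids entirely. The role of condition~(3) is then transparent: it is exactly the bound $\tau\!\bigl(n/(p_ip_{k+1})\bigr)<p_{k+1}$ needed above, not merely a patch for ``tightest borderline cases.''
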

He also conjectured that all primitive covering numbers have a form slightly more general than the above sufficient conditions.  This conjecture was disproven by Jones and White \cite{Jones17}, who demonstrated an infinite family of primitive covering numbers which were not of the form conjectured by Sun.  Subsequently Jones, Harrington, and Phillips \cite{harrington17} found infinitely many more counterexamples and provided necessary and sufficient conditions for a number of the form $2^ap^bq$ to be a primitive covering number.

Many more general problems and results about covering systems can be framed in terms of covering numbers.  One of the most well-known open problems related to covering systems is the conjecture of Erd\H{o}s and Selfridge which is equivalent to the following.

\begin{conjecture}[Erd\H{o}s-Selfridge]
    There are no odd covering numbers.
\end{conjecture}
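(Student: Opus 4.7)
This is the Erd\H{o}s--Selfridge conjecture, a famous open problem, so any proposal is speculative. I would attempt an attack combining the odd-abundancy obstruction with density-sieve techniques used in recent work on covering systems.

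The plan is to suppose, for contradiction, that some odd $n$ is a covering number, so that there exist distinct odd divisors $m_1<\cdots<m_k$ of $n$, each $>1$, and residues $a_i$ with every integer lying in some $a_i\pmod{m_i}$. A necessary condition is $\sum_i 1/m_i\geq 1$, which forces $h(n)=\sigma(n)/n\geq 2$; that is, $n$ must be odd abundant. This alone cannot be the contradiction since odd abundant numbers exist (the smallest being $945$), but it is still a strong constraint on the factorization of $n$ of exactly the kind the paper's function $c(n)$ is designed to track: the small prime factors of $n$ must all appear, starting from $3$, up to some level, in the manner of the primitive-abundant analysis.

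Next, I would try to build an odd analogue $c_{\mathrm{odd}}(n)\leq h(n)$ of $c(n)$, designed to measure how close the odd divisors of $n$ come to realizing a distinct cover, with the goal of proving $c_{\mathrm{odd}}(n)<1$ for every odd $n$. To make this bite, one has to track not merely the total density $\sum 1/m_i$ but the density of the uncovered set at each stage, and exploit the fact that all moduli are odd to force unavoidable ``waste'' when aligning congruences modulo the small odd primes of $n$. This is in the spirit of the density-increment arguments of Hough and of Balister--Bollob\'as--Morris--Sahasrabudhe--Tiba on the minimum-modulus problem, transferred to the odd-moduli setting where one hopes for a sharper inequality because the smallest available prime is $3$ rather than $2$.

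The main obstacle, and the reason the conjecture has stood for more than half a century, is precisely that the density bound $\sum 1/m_i\geq 1$ \emph{can} be satisfied by distinct odd divisors of an odd abundant $n$, so no contradiction will come from density alone; one needs a genuinely quantitative lower bound on uncovered density specific to all-odd moduli that is strong enough to beat the trivial threshold by a positive constant. A complete argument would likely have to be paired with a structural analysis extending the Jones--Harrington--Phillips classification of primitive covering numbers of the form $2^ap^bq$ to an analogous ``$3^ap^bq$'' regime, and showing that no such odd primitive covering number can arise. I would expect the density-sieve step to be the real sticking point: the current technology seems just shy of producing the required absolute obstruction at modulus $3$, and closing that gap is where a genuinely new idea appears to be needed.
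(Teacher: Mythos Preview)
The statement is a \emph{conjecture}, not a theorem: the paper presents it as the open Erd\H{o}s--Selfridge problem and explicitly remarks that ``the conjecture remains open,'' citing only the partial results of \cite{Hough19} and \cite{BBMST} (any odd covering number is divisible by $9$ or $15$, and there are no odd squarefree covering numbers). So there is no proof in the paper to compare your proposal against, and you were right to flag the problem as open from the outset.

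Your write-up is therefore not a proof but a research outline, and you acknowledge as much. The ingredients you list---the abundancy obstruction $h(n)\geq 2$, a $c$-type functional tailored to odd moduli, and the density-increment machinery of Hough and Balister--Bollob\'as--Morris--Sahasrabudhe--Tiba---are the natural ones, and your diagnosis of the sticking point (that $\sum_i 1/m_i\geq 1$ is attainable with distinct odd divisors, so pure density cannot yield the contradiction) is accurate. But none of this constitutes a proof, nor do you claim it does; the ``genuine gap'' is simply that the conjecture is unresolved and your sketch does not resolve it.

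One small notational slip: in the paper's normalization $c(n)=1+r(n)/n$, so $c(n)\geq 1$ always and the target inequality for ``$n$ is not covering'' is $c(n)<2$, not $c_{\mathrm{odd}}(n)<1$. Presumably you intended your $c_{\mathrm{odd}}$ to track the covered fraction $r(n)/n$ rather than $1+r(n)/n$; either way, it is worth aligning with the paper's convention to avoid confusion.
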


While the conjecture remains open, recent advancements \cites{Hough19,BBMST} show that any odd covering number would necessarily be divisible by 9 or 15, and furthermore that there are no odd squarefree covering numbers.

\subsubsection{Abundant Numbers}

Davenport \cite{davenport33} first proved that the set of abundant numbers possesses a natural density without obtaining explicit numerical bounds for it.  The following year, Erdős \cite{erdos34} gave an elementary proof of the existence of this density based on a fundamental observation: the density exists if and only if the sum $\sum_{a \in \mathcal{P}_{\mathcal{A}}} \frac{1}{a}$ of reciprocals of primitive abundant numbers converges, an idea which we will adapt for covering numbers.
The quest for explicit numerical bounds of this density began with the work of Behrend \cites{behrend32,behrend33}. In 1932, Behrend first showed that $d(\mathcal{A}) < 0.47$, and in fact proved the stronger result that $\mathcal{A}(n)/n < 0.47$ for all integers $n$. The following year, he refined his methods to obtain the first two-sided bounds $0.241 < d(\mathcal{A}) < 0.314$.
Behrend's method involves partitioning integers according to their largest smooth divisor and applying analytic estimates to bound the contribution from each part, which we describe further in Section \ref{sec:behrenddeleglise}. 
Subsequent improvements came from Salié \cite{salie55}, who improved the lower bound to $0.246$, and Wall et al.\ \cites{wall71,wall72}, who found $0.2441 < d(\mathcal{A}) < 0.2909$ (while their lower bound was not as strong as Salié's, they determined the first digit of this constant). The bounds were substantially refined by Del\'eglise \cite{deleglise}, who established  the bounds $0.2474 < d(\mathcal{A}) < 0.2480$, providing the first three digits.
Most recently, Kobayashi \cites{kobayashiThesis, kobayashi14} further improved Del\'eglise's approach to obtain four digits with the bounds \begin{equation}
    0.2476171 < d(\mathcal{A}) < 0.2476475. \label{eq:kobayashibds}
\end{equation} 

Klyve, Pidde, and Temple \cite{klyve19} investigated the density of abundant numbers experimentally under the assumption that the density of abundant numbers in long intervals behaves like independent, identically distributed variables with mean $d(\mathcal{A})$, though they also gave some reasons to question this assumption.  Assuming this hypothesis, their data suggest that, with high confidence, $d(\mathcal{A})$ lies in the interval (0.247619274, 0.247619713). We find below that $d(\mathcal{A})$ does in fact lie in (the upper part of) this range.

\section{Results}

\begin{theorem} \label{thm:covdensity}
    The set $\mathcal{C}$ of covering numbers has a natural density, and  \[0.103230 < d(\mathcal{C}) < 0.103398.\]
\end{theorem}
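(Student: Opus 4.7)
The plan is to mirror, for covering numbers, the two-stage program used for abundant numbers: first establish that $d(\mathcal{C})$ exists via an Erd\H{o}s-type argument, and then squeeze its value by a Behrend--Del\'eglise computation in which the abundancy index $h$ is replaced throughout by the function $c$ designed to certify covering.

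For existence, I would introduce $c(n)$ as a divisor-based quantity satisfying $c(n)\le h(n)$ with the property that $n\in\mathcal{C}$ is equivalent to an explicit inequality on $c(n)$ (most naturally $c(n)\ge 2$, matched with the abundancy threshold). The definition must be crafted so that $c$ inherits the two structural features that drive Erd\H{o}s's argument for $\mathcal{A}$: monotonicity under divisibility, making $\mathcal{C}$ the upward closure of its primitive elements $\mathcal{P}_{\mathcal{C}}$; and enough control to estimate $|\mathcal{C}\cap[1,x]|$ via inclusion--exclusion over divisibility by primitive covering numbers. Since $c\le h$, the inclusion $\mathcal{C}\subseteq\mathcal{A}$ is immediate, so the convergence of $\sum_{a\in\mathcal{P}_{\mathcal{A}}}1/a$ established by Erd\H{o}s forces $\sum_{n\in\mathcal{P}_{\mathcal{C}}}1/n<\infty$, which is exactly what the framework requires to conclude that $d(\mathcal{C})$ exists.

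For the numerical bounds, I would partition integers as $n=sm$ with $s$ the largest $y$-smooth divisor and $m$ the $y$-rough cofactor, following Behrend, and then recurse on the prime factors of $m$ in Del\'eglise's style. At each stage the partial factorization determines a value of $c$, and the branch is terminated as soon as $c$ is safely above or below the covering threshold so that every extension lies entirely in $\mathcal{C}$ or entirely outside $\mathcal{C}$; the density of the surviving ambiguous branches is estimated via Mertens-type bounds for rough numbers. Summing the contributions yields matched upper and lower bounds on $d(\mathcal{C})$ that tighten as the smoothness cutoff $y$ and the recursion depth are increased.

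The main obstacle, flagged in the abstract, is that the covering property is genuinely more combinatorial than abundancy: $c(n)$ does not factor over prime powers as cleanly as $h$ does, so a naive port of the Del\'eglise recursion branches too aggressively to reach three decimal digits in any reasonable computation. The essential new ingredient will therefore be the design of computable monotone surrogates for $c$ that collapse whole families of branches at once, together with a careful accounting of which smooth prefixes must be expanded individually and which can be handled in bulk by cruder estimates. Once these tools are in place the proof reduces to a finite but substantial enumeration of smooth prefixes and their rough completions, which, when executed, produces the claimed bounds $0.103230<d(\mathcal{C})<0.103398$.
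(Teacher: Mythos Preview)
Your existence argument contains a genuine error. From $\mathcal{C}\subseteq\mathcal{A}$ you cannot conclude that $\sum_{n\in\mathcal{P}_{\mathcal{C}}}1/n$ converges via the convergence of $\sum_{a\in\mathcal{P}_{\mathcal{A}}}1/a$, because $\mathcal{P}_{\mathcal{C}}\not\subseteq\mathcal{P}_{\mathcal{A}}$: a primitive covering number is typically \emph{not} primitive abundant. For instance $80=2^4\cdot 5$ is a primitive covering number, but $20\mid 80$ is already abundant, so $80\notin\mathcal{P}_{\mathcal{A}}$. Primitive sets can have divergent reciprocal sums, so no soft comparison with $\mathcal{P}_{\mathcal{A}}$ suffices. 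The paper instead proves a structural lemma specific to covering: if $n\in\mathcal{P}_{\mathcal{C}}$ then $P^+(n)\le\tau(n/P^+(n))$. Combining this with smooth-number and large-divisor estimates gives $\mathcal{P}_{\mathcal{C}}(x)\le x\exp\bigl((-\tfrac{1}{2\sqrt{\log 2}}+o(1))\sqrt{\log x}\log\log x\bigr)$, from which convergence of the reciprocal sum (and hence existence of $d(\mathcal{C})$) follows.

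Your plan for the numerical bounds is right in spirit for the \emph{upper} bound and matches the paper's approach: replace $h$ by an easily computable upper bound $c'(n)\ge c(n)$ (built from a Sun-almost-covering divisor and a generalized complementary-Bell-number identity), then run a refined Behrend--Del\'eglise partition. But the symmetric ``matched upper and lower bounds'' you describe does not materialize. A computable \emph{upper} surrogate $c'\ge c$ lets you terminate branches as non-covering, but to terminate a branch as covering you would need a computable \emph{lower} surrogate certifying $c(n)=2$, and deciding this is essentially as hard as deciding covering itself (the paper notes the problem is likely NP-complete). The paper therefore obtains the lower bound by an entirely different route: exhaustively determining the primitive covering numbers below $10^6$ (using Sun-type sufficient conditions where possible and an integer-programming solver otherwise) and computing exactly the density of their set of multiples. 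Your proposal does not account for this asymmetry.
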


As mentioned above, the covering numbers share a close relationship with abundant numbers and our methods to compute this bound draw heavily from  the methods developed to bound the density of abundant numbers.  At the same time, one of our ideas leads to a substantially more efficient algorithm to compute the density of abundant numbers, which allows for the following improved bounds, which give us the first 7 digits of $d(\mathcal{A})=0.2476196\ldots$.

\begin{theorem} \label{thm:abundantbds} The natural density of the set of abundant numbers satisfies the bounds
    \[0.247619608 <d(\mathcal{A})< 0.247619658.\]
\end{theorem}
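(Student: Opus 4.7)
The approach is to refine the Behrend--Deléglise--Kobayashi framework by combining it with an algorithmic improvement that emerges as a byproduct of our analysis of $d(\mathcal{C})$. Fix a smoothness threshold $y$ and decompose each $n$ as $n = ab$ with $a$ the $y$-smooth part of $n$ and $b = n/a$ satisfying $P^-(b) > y$. Multiplicativity of $h$ yields $h(n) = h(a)h(b)$, so $n \in \mathcal{A}$ iff $h(b) \geq 2/h(a)$, and since the $y$-smooth and complementary parts of a random integer are density-independent,
\[
d(\mathcal{A}) \;=\; \sum_{a\ y\text{-smooth}} \frac{G(2/h(a))}{a}, \qquad G(t) := d\bigl(\{n : P^-(n) > y,\ h(n) \geq t\}\bigr).
\]

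To bound $G(t)$ rigorously, I would iterate, for the smallest prime $q$ exceeding the current threshold,
\[
G^{(q)}(t) \;=\; G^{(q')}(t) \;+\; \sum_{e \geq 1} \frac{G^{(q')}(t/h(q^e))}{q^e},
\]
where $q'$ denotes the next prime after $q$, terminating the recursion at a large truncation prime $Q$ and controlling the remainder via the Markov-style inequality $\mathbf{1}_{h(n) \geq t} \leq h(n)/t$ on one side and the trivial bound $\mathbf{1}_{h(n) \geq t} \geq \mathbf{1}_{t \leq 1}$ on the other. The outer sum over $y$-smooth $a$ then splits into three regimes: those $a$ with $h(a) \geq 2$, for which $G(2/h(a))$ equals the full density of integers with $P^-(n) > y$; those $a$ with $h(a)$ so small that no admissible $b$ can lift $h(n)$ to $2$, contributing $0$; and a finite middle range whose contributions must be evaluated carefully.

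The main obstacle is quantitative. To force the gap between the rigorous upper and lower bounds below $5 \times 10^{-8}$, the threshold $y$ must be pushed well beyond the value used by Kobayashi, at which point naively enumerating each middle-range $y$-smooth $a$ and invoking an independent recursive evaluation of $G$ becomes computationally infeasible. The key efficiency improvement is to interleave the outer sum over $a$ with the inner recursion for $G$: traversing $y$-smooth $a$ in an order aligned with the recursion tree of $G$ --- equivalently, performing a single combined depth-first pass over the joint prime-power tree --- so that each common prime-power prefix is handled only once and the inner density evaluations are shared across all $a$ carrying that prefix. An interval-arithmetic execution of this combined traversal, with $y$ and the truncation depth chosen large enough to squeeze the two-sided estimate into $(0.247619608,\, 0.247619658)$, completes the proof.
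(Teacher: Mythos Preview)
Your adaptive tree-traversal framework --- decomposing each integer according to its prime-power prefix, processing the tree depth-first, and sharing work across branches with a common prefix --- is essentially the paper's \emph{smooth-rough-divisor-partition} $W$ of Section~5.3 and~6. The paper formalizes this as a set of pairs $(a,q)$ with $P^+(a)\le q$ whose associated sets $M_{a,q}=\{av:P^-(v)\ge q\}$ partition $\mathbb{N}$, and builds $W$ adaptively so that the leaf contributions are of roughly uniform size (controlled by a cutoff $Z$ on $f(n,k)=\tfrac{1}{nk}\widetilde A_{q_k}(2/h(n))$). The lower bound is obtained exactly as you describe, from those leaves with $h(a)\ge 2$. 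So far your plan and the paper's coincide.

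The genuine gap is at the leaves. You propose to close off the recursion at some prime $Q$ using only the first-moment (Markov) inequality $\mathbf{1}_{h(n)\ge t}\le h(n)/t$. The paper instead bounds each leaf term $A_q(2/h(a))$ via Del\'eglise's moment inequality
\[
A_y(x)\ \le\ \frac{\mu_{y,r}-1}{x^r-1}\prod_{p<y}\Bigl(1-\tfrac1p\Bigr),
\]
optimized over $r=2^i$ with $0\le i\le 25$, and in Appendix~A extends Del\'eglise's estimates to accommodate $r$ up to $5\times 10^7$ and $y$ up to $10^8$. This is not a cosmetic refinement: when $h(a)$ is close to $2$ the argument $x=2/h(a)$ is only slightly larger than $1$, and the $r=1$ bound scales like $1/(x-1)$, whereas the optimized high-moment bound decays like $1/(x^r-1)$ for large $r$. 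The paper's adaptive stopping rule is driven by exactly these high-moment estimates (via $f(n,k)$), and even so the final run with $Z=2^{-78}$, $Q=q_{10^6}$ required roughly $20000$ CPU hours. Replacing the leaf estimate by the first moment alone forces the tree to be traversed far deeper on precisely the branches that matter, and there is no indication --- and good reason to doubt --- that the resulting computation is feasible at seven-digit precision. You should retain the Del\'eglise high-moment machinery at the leaves; the interleaved traversal is the paper's new idea, but it does not replace the moment bounds, it sits on top of them.
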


Much of this paper concerns the calculations that lead to the bounds for $d(\mathcal{C})$ and $d(\mathcal{A})$ listed above.\footnote{C++ code used to implement these methods is available at \url{https://github.com/agreatnate/AbunDens}}  We first need to establish that the set $\mathcal{C}$ has a density, which will follow from the following bound on the counting function of primitive covering numbers.

\begin{theorem} \label{thm:primCovNums}
    For $\epsilon >0$ and sufficiently large $x$, the count of the primitive covering numbers up to $x$ is bounded above by \begin{equation} \mathcal{P}_{\mathcal C}(x) \leq x\exp\left(\left(-\tfrac{1}{2\sqrt{\log 2}} +\epsilon\right)\sqrt{ \log x}\log \log x\right). \label{eq:primCovBd} \end{equation}
\end{theorem}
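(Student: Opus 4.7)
My plan is to prove this bound in two stages: first establish a necessary structural condition on primitive covering numbers, then count all integers satisfying that condition up to $x$. For the structural step, let $n = p_1^{\alpha_1}\cdots p_k^{\alpha_k}$ be a primitive covering number with $p_1 < \cdots < p_k$. Using that $n/p_{i+1}$ is not a covering number while $n$ is, combined with the function $c(n)\leq h(n)$ alluded to in the abstract (with $c(n) \geq 2$ characterizing covering numbers), I would show that the primes grow essentially geometrically, with
\[
    p_{i+1} \ll \tau(p_1^{\alpha_1}\cdots p_i^{\alpha_i})
\]
serving as a necessary analogue of Sun's sufficient condition (Theorem~\ref{thm:sunPrimitive}). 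Intuitively, if $p_{i+1}$ were much larger than the number of divisors of $p_1^{\alpha_1}\cdots p_i^{\alpha_i}$, then the divisors of $n$ divisible by $p_{i+1}$ would be too few (and their reciprocal sum too small) to be indispensable in any distinct covering by divisors of $n$, contradicting the primitivity hypothesis. Iterating this bound forces $\omega(n) \gg \sqrt{\log n/\log 2}$ and confines each $p_i$ to an interval of the form $p_i \ll 2^{i}(\log p_i)^{O(1)}$.

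For the counting step, I would sum over $k = \omega(n)$ and over admissible prime tuples. For fixed $k\asymp \sqrt{\log x/\log 2}$ the number of admissible skeletons $(p_1,\ldots,p_k)$ is at most $\prod_{i=1}^{k}\pi(C\cdot 2^{i})\ll \prod_{i}2^{i}/(i\log 2)$, while for each skeleton the number of exponent tuples $(\alpha_1,\ldots,\alpha_k)$ with $\prod p_i^{\alpha_i}\leq x$ is only polylogarithmic in $x$ and absorbed into the $\epsilon$. Since $\prod_{i}2^{i}$ is comparable to $x$ on average, the net saving comes from the factor $1/\prod_{i}(i\log 2) = 1/(k!\,(\log 2)^{k})$; Stirling's formula together with $k\asymp\sqrt{\log x/\log 2}$ then yield the exponential factor $\exp\bigl(-\tfrac{1}{2\sqrt{\log 2}}\sqrt{\log x}\log\log x\bigr)$.

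The principal difficulty is the structural step. Sun's theorem only provides \emph{sufficient} conditions, and the work of Jones-Harrington-Phillips exhibits primitive covering numbers outside Sun's framework, so any necessary growth condition on the primes must be looser. Making the function $c(n)$ quantitative enough to extract the required bound on $p_{i+1}$---in particular, verifying that the drop from $c(n)$ to $c(n/p_{i+1})$ is too small to be pivotal when $p_{i+1}$ is much larger than $\tau(p_1^{\alpha_1}\cdots p_i^{\alpha_i})$---is the most delicate part. Once this is in hand, the counting reduces to a relatively standard Stirling-type estimate, and corrections for small $k$, atypical exponent patterns, or primitive covering numbers outside the Sun-type skeleton can be absorbed into the $\epsilon$ term.
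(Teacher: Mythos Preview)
Your structural step has a genuine gap. The iterated inequality you want, $p_{i+1}\ll\tau(p_1^{\alpha_1}\cdots p_i^{\alpha_i})$ for every $i$, does not follow from primitivity in any way I can see, and your heuristic for it is off. What one can actually extract from the hypothesis that $n/p$ is not covering while $n$ is (this is the paper's Lemma~\ref{lem:largestprime}) is only $p\le\tau(n/p)$; but $\tau(n/p_{i+1})$ involves all of $p_{i+2},\dots,p_k$ and their exponents, so it can be enormously larger than $\tau(p_1^{\alpha_1}\cdots p_i^{\alpha_i})$. Your argument that the divisors of $n$ divisible by $p_{i+1}$ would be ``too few'' fails for the same reason: their number is $\tau(n)\cdot\alpha_{i+1}/(\alpha_{i+1}+1)$, which is controlled by $\tau(n)$, not by the divisors built only from $p_1,\dots,p_i$. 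Without the iterated bound your conclusion $\omega(n)\gg\sqrt{\log n/\log 2}$ collapses, and with it the Stirling counting. You acknowledge this step as ``the most delicate part'', but the proposal gives no mechanism for it, and the Jones--White examples you cite show there is real content here.

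The paper avoids all of this by using the single inequality $P^+(n)\le\tau\bigl(n/P^+(n)\bigr)$ once and then arguing by dichotomy rather than by enumerating prime skeletons. For any threshold $y$, a primitive covering number $n\le x$ must have either $P^+(n)\le y$ or $\tau(n)>y$; taking $y=\exp\bigl(\sqrt{\log 2\cdot\log x}\bigr)$ and applying the standard estimates $\psi(x,y)=x\exp\bigl((-1+o(1))\tfrac{\log x}{\log y}\log\tfrac{\log x}{\log y}\bigr)$ (in Hildebrand's range) and Norton's $\Delta(x,y)=x\exp\bigl((-1+o(1))\tfrac{\log y}{\log 2}\log\log y\bigr)$ gives the claimed bound for each piece. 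No iteration, no prime-tuple enumeration, and the constant $\tfrac{1}{2\sqrt{\log 2}}$ falls out of balancing the two exponents.
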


It is interesting to compare this to the known bounds for the count of primitive abundant numbers.  In 1935, Erd\H{o}s \cite{erdos35} proved that for sufficiently large $x$
\[ x  \exp\left(-c_1\sqrt{\log x \log \log x}\right) \leq  \mathcal{P}_{\mathcal A} (x) \leq x\exp\left(-c_2\sqrt{\log x \log \log x}\right) \]
with $c_1 = 8$ and $c_2 = 1/25$. In 1985, Iv\'ic \cite{ivic85} improved this, by showing that these constants can be replaced by $c_1 = \sqrt{6} + \epsilon$ and $c_2 = 1/\sqrt{12} - \epsilon$. Most recently, Avidon \cite{avidon} improved both constants further, showing one can take $c_1=\sqrt{2} + \epsilon$ and $c_2=1-\epsilon$.  As a corollary, we see that there are far more primitive abundant numbers than primitive covering numbers.

\begin{corollary} We have $\lim \limits_{x \to \infty} \frac{\mathcal{P}_{\mathcal A}(x)}{\mathcal{P}_{\mathcal C}(x)} = \infty$. In fact, for $\epsilon>0$ and sufficiently large $x$  
    \[\frac{\mathcal{P}_{\mathcal A}(x)}{\mathcal{P}_{\mathcal C}(x)} > \exp\left( \left(\tfrac{1}{2\sqrt{\log 2}} -\epsilon\right)\sqrt{ \log x}\log \log x\right).\]
\end{corollary}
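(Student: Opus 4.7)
The approach is a direct comparison: Theorem~\ref{thm:primCovNums} provides a strong upper bound for $\mathcal{P}_{\mathcal{C}}(x)$, and Avidon's lower bound for $\mathcal{P}_{\mathcal{A}}(x)$ is quoted in the paragraph preceding the corollary, so dividing the two and tracking the exponents should do everything. The only substantive observation needed is that $\sqrt{\log x}\,\log\log x$ grows strictly faster than $\sqrt{\log x \log\log x}$, by a factor of $\sqrt{\log\log x}$.

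Concretely I would proceed in three steps. First, fix the target $\epsilon > 0$ from the statement and a smaller parameter $\epsilon' > 0$ to be chosen later. Applying Theorem~\ref{thm:primCovNums} in the denominator and Avidon's bound
\[ \mathcal{P}_{\mathcal{A}}(x) \;\geq\; x\exp\!\left(-(\sqrt{2}+\epsilon')\sqrt{\log x \log\log x}\right) \]
in the numerator, one gets for all sufficiently large $x$
\[ \frac{\mathcal{P}_{\mathcal{A}}(x)}{\mathcal{P}_{\mathcal{C}}(x)} \;\geq\; \exp\!\left(\left(\tfrac{1}{2\sqrt{\log 2}} - \epsilon'\right)\sqrt{\log x}\,\log\log x \;-\; (\sqrt{2}+\epsilon')\sqrt{\log x \log\log x}\right). \]
Second, I would record the identity
\[ \sqrt{\log x}\,\log\log x \;=\; \sqrt{\log x \log\log x}\,\cdot\,\sqrt{\log\log x}, \]
which shows that the negative term in the exponent is smaller than the leading positive term by a factor of $\sqrt{\log\log x}$, hence negligible. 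Third, taking $\epsilon'$ sufficiently small and $x$ sufficiently large absorbs the secondary term into an $\epsilon$-loss in the main coefficient, producing a net exponent at least $\bigl(\tfrac{1}{2\sqrt{\log 2}} - \epsilon\bigr)\sqrt{\log x}\,\log\log x$, which is precisely the stated explicit bound. Since this exponent diverges, the claim $\mathcal{P}_{\mathcal{A}}(x)/\mathcal{P}_{\mathcal{C}}(x) \to \infty$ is an immediate consequence.

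I do not anticipate any real obstacle in the argument itself: the proof is essentially a one-line computation once Theorem~\ref{thm:primCovNums} is available, with the only care needed in the $\epsilon$-bookkeeping and in verifying the growth comparison between $\sqrt{\log x}\log\log x$ and $\sqrt{\log x \log\log x}$. All of the genuine difficulty is packaged into the upper bound of Theorem~\ref{thm:primCovNums}, so the corollary is in effect just a restatement of that theorem against the known lower bound for primitive abundant numbers.
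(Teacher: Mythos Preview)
Your proposal is correct and matches the paper's approach: the paper states this as an immediate corollary of Theorem~\ref{thm:primCovNums} combined with Avidon's lower bound for $\mathcal{P}_{\mathcal A}(x)$, without writing out a separate proof. Your three-step argument fills in exactly the details the paper leaves implicit, including the key observation that $\sqrt{\log x}\,\log\log x$ dominates $\sqrt{\log x\log\log x}$ by a factor of $\sqrt{\log\log x}$.
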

We note that the fact that there are fewer primitive covering numbers than primitive abundant numbers (and similarly that the density of the covering numbers exists) does not follow just from the fact that the covering numbers form a subset of the abundant numbers.\footnote{An example of a primitive subset of the abundant numbers whose counting function grows faster than $\mathcal{P}_\mathcal{A}(x)$ is the set of numbers of the form $6p$ where $p$ is prime.  By modifying this construction we can find primitive subsets of the abundant numbers whose sets of multiples do not have an asymptotic density.}

Unlike the primitive abundant numbers, we have not been able to obtain a lower bound for $\mathcal{P}_C(x)$ of the same shape as the upper bound in \eqref{eq:primCovBd}.  It follows from Theorem \ref{thm:sunPrimitive} that, for every odd prime $p$, the integer $n=2^{p-1}p$ is a primitive covering number.\footnote{For comparison, a number of the form $2^kp$ is a primitive abundant number whenever $2^{k}< p <2^{k+1}$.  Thus, up to $x$ there are asymptotically $ \gg \frac{\sqrt{x}}{\log x}$ many primitive abundant numbers of this form up to $x$, while there are only $\gg \frac{\log x}{\log \log x}$ many primitive covering numbers of this form.} Thus, $\mathcal{P}_\mathcal{C}(x)\gg \frac{\log x}{\log \log x}$. Using Lemma \ref{lem:largestprime}, we obtain the following strengthening of Theorem \ref{thm:sunPrimitive}.

\begin{theorem} \label{thm:strongersun} If an even integer $n$ satisfies the hypotheses of Theorem \ref{thm:sunPrimitive} with condition (3) stated there replaced with
\begin{enumerate}
\setcounter{enumi}{2}
    \item for each $1\leq i \leq k$, $p_{k+1}>\tau\left(\frac{n}{p_ip_{k+1}}\right)$,
\end{enumerate}
then $n$ is a primitive covering number.
\end{theorem}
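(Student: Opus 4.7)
The plan is to decouple the theorem into its two implicit claims---that $n$ is a covering number, and that $n$ is primitive---and handle them separately. For the first, inspection of Sun's proof of Theorem~\ref{thm:sunPrimitive} shows that the explicit distinct covering system with moduli dividing $n$ is assembled using only hypotheses (1) and (2); the original condition~(3) enters only in the primitivity portion of Sun's argument. Since our hypotheses retain (1) and (2), Sun's construction applies verbatim and already witnesses that $n$ is a covering number.

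For primitivity it suffices to show that $n/p_i$ is not a covering number for each prime factor $p_i$ of $n$: any proper divisor $d$ of $n$ divides some $n/p_i$, and multiples of covering numbers are covering, so if $n/p_i$ fails to be covering then so does each of its divisors. I would then split into two cases. For $i = k+1$, the quotient $n/p_{k+1} = p_1^{\alpha_1}\cdots p_k^{\alpha_k}$ has largest prime factor $p_k$; if it were a covering number, Lemma~\ref{lem:largestprime} would force $p_k \leq \tau(p_1^{\alpha_1}\cdots p_{k-1}^{\alpha_{k-1}})$, directly contradicting condition~(1), which gives $p_k = \tau(p_1^{\alpha_1}\cdots p_{k-1}^{\alpha_{k-1}}) + 1$. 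For $1 \leq i \leq k$, the largest prime factor of $n/p_i$ is still $p_{k+1}$, appearing to the first power, and Lemma~\ref{lem:largestprime} would force $p_{k+1} \leq \tau((n/p_i)/p_{k+1}) = \tau(n/(p_i p_{k+1}))$, directly contradicting the new hypothesis~(3').

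The main obstacle is more bookkeeping than mathematical depth. The two things I expect to take care over are: (i) verifying that Sun's original covering-system construction in Theorem~\ref{thm:sunPrimitive} genuinely depends only on (1) and (2), so that replacing (3) by (3') does not disrupt the covering part; and (ii) checking that Lemma~\ref{lem:largestprime} is formulated with enough generality to cover both the arbitrary-exponent case (the largest prime $p_k$ of $n/p_{k+1}$ may have exponent $\alpha_k > 1$) and the exponent-one case (the largest prime $p_{k+1}$ of $n/p_i$ for $i\leq k$ appears to the first power). Once these are in hand, the argument is just the two-case contradiction sketched above.
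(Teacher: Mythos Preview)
Your overall strategy matches the paper's: split into ``$n$ is covering'' (which indeed needs only (1) and (2)) and ``$n$ is primitive''. The gap is in the primitivity argument, specifically your handling of the case $i=k+1$.

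Lemma~\ref{lem:largestprime} is stated only for \emph{primitive} covering numbers, so you cannot apply it directly to $n/p_i$ under the bare assumption that $n/p_i$ is covering; you must first pass to a primitive covering divisor $d$. When $p_{k+1}\mid d$ your argument survives essentially unchanged: $P^+(d)=p_{k+1}$, and $\tau(d/p_{k+1})\le\tau(n/(p_ip_{k+1}))<p_{k+1}$ by condition~(3'), contradicting the lemma. But when $p_{k+1}\nmid d$ (in particular, throughout your $i=k+1$ case) you have $d\mid\ell=p_1^{\alpha_1}\cdots p_k^{\alpha_k}$, and here your proposed use of condition~(1) does not close the argument. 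Even applied to $\ell$ itself, Lemma~\ref{lem:largestprime} would give $p_k\le\tau(\ell/p_k)=\alpha_k\,\tau(p_1^{\alpha_1}\cdots p_{k-1}^{\alpha_{k-1}})=\alpha_k(p_k-1)$; your displayed bound $\tau(p_1^{\alpha_1}\cdots p_{k-1}^{\alpha_{k-1}})$ drops the factor $p_k^{\alpha_k-1}$ from $\ell/p_k$, and once that factor is restored there is no contradiction with condition~(1) whenever $\alpha_k\ge 2$. The same obstruction arises for any primitive $d\mid\ell$ with $P^+(d)=p_j$ and $\alpha_j\ge2$, so this is not merely a formulation issue with the lemma.

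The paper handles this case by a different route: condition~(1) says precisely that $\ell$ is a Sun-almost-covering number (Theorem~\ref{thm:sunalmostcover}), hence $\ell$ is not covering, so no divisor of $\ell$ can be covering either. This single observation disposes of both your $i=k+1$ case and the $p_{k+1}\nmid d$ subcase for $i\le k$, and is the missing ingredient in your plan.
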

Theorem \ref{thm:strongersun} is stronger than Theorem \ref{thm:sunPrimitive}, both in that this theorem implies Sun's result, and that, in practice, many more integers can be shown to be primitive covering numbers using this theorem.  For example, tabulating integers up to $10^{50}$ satisfying these conditions, we find that there are 1433 integers that satisfy the 3 sufficient conditions of Theorem \ref{thm:sunPrimitive} to be primitive covering numbers and 2\,592\,765 integers up to $10^{50}$ that are guaranteed to be primitive covering numbers using Theorem \ref{thm:strongersun} instead. However, we haven't yet been able to use this to improve on the lower bound for $\mathcal{P}_\mathcal{C}(x)$.  More data is given in Table \ref{tab:primcovcounts} in the Appendix.

\section{Primitive Covering Numbers}

The upper bound we obtain for the number of primitive covering numbers is a consequence of the following observation, which also follows from Lemma 2.1 of \cite{sun07}. 
\begin{lemma} \label{lem:largestprime}
    If $n$ is a primitive covering number then \begin{equation} P^+(n) \leq \tau\left(\frac{n}{P^+(n)}\right). \label{eq:primcovineq} \end{equation}
\end{lemma}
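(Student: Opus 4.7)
The plan is to exploit primitivity by isolating the moduli that do not divide $n/p$, where $p = P^+(n)$. Write $n = p^a m$ with $\gcd(m,p)=1$; these special moduli are exactly the integers of the form $p^a d'$ with $d' \mid m$.

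First I would fix a distinct covering system $\mathcal S$ with moduli dividing $n$ and split it into type A moduli (those dividing $n/p$) and type B moduli (the rest, necessarily of the form $p^a d'$). Because $n$ is primitive, $n/p$ is not a covering number, and so the type A progressions alone cannot cover $\mathbb Z$: otherwise they would constitute a distinct covering system with every modulus dividing $n/p$. Pick a residue $r$ modulo $n/p$ whose entire class is missed by type A; then every integer $N \equiv r \pmod{n/p}$ must be caught by some type B progression.

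Next I would examine how a single type B progression $b \pmod{p^a d'}$ meets this class. Since $d' \mid n/p$, it can intersect the class only when $b \equiv r \pmod{d'}$, and in that event the intersection inside $\{N \equiv r \pmod{n/p}\}$ reduces to $\{N \equiv b \pmod{p^a}\}$. Writing $N = r + k\,(n/p) = r + k p^{a-1} m$ and using $\gcd(m,p) = 1$, the residue $N \bmod p^a$ ranges over exactly the $p$ values $r + i p^{a-1}$ for $0 \leq i \leq p-1$. Each type B modulus $p^a d'$ covers at most one of these residues mod $p^a$, and distinctness of the covering system forces the chosen $d'$'s to be distinct divisors of $m$. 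Hence $\tau(m) \geq p$, and since $\tau(n/p) = a\,\tau(m) \geq \tau(m)$, the bound $P^+(n) \leq \tau(n/P^+(n))$ follows.

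The main obstacle is handling $a \geq 2$ cleanly: one cannot simply remove all $p$-divisible moduli, because primitivity only gives that $n/p$ (rather than $n/p^a$) fails to be a covering number. The correct split is by whether $p^a$ exactly divides the modulus, and the CRT-style step must notice that although there are $p^a$ residue classes mod $p^a$ in total, the class $r \pmod{n/p}$ realizes only $p$ of them — precisely the count $P^+(n)$ we need to extract.
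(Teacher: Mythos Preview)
Your proposal is correct and follows essentially the same argument as the paper: both isolate an uncovered residue modulo $n/p$ (using that $n/p$ is not covering by primitivity), lift it to $p$ residues modulo $n$ distinguished by their class modulo $p^a$, and observe that only the $\tau(m)$ moduli divisible by $p^a$ can cover these, each hitting at most one. The only cosmetic difference is the final bookkeeping: you conclude via $\tau(n/p)=a\,\tau(m)\geq\tau(m)\geq p$, while the paper phrases the same count as $\tau(n)-\tau(n/p)\leq\tau(n/p)$.
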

\begin{proof}
    Suppose $n$ is a primitive covering number, and let $p=P^+(n)$, with $p^k$ being the largest power of $p$ dividing $n$.  Then $n/p$ is not a covering number, meaning that at least one residue modulo $n/p$ is left uncovered by any residue system modulo the divisors of $n/p$.  This uncovered residue lifts to $p$ distinct residues left uncovered by the same system modulo $n$, each having a distinct remainder modulo $p^k$.   In order for $n$ to be a covering number, it is necessary to cover each of these residues using a modulus that is a divisor of $n$ but not a divisor of $n/p$.  Each such divisor is necessarily divisible by $p^k$, meaning any congruence class modulo such a divisor can cover at most one of these $p$ uncovered residues.  A covering will thus require at least $p$ divisors of $n$ which are not divisors of $n/p$, of which there are \[\tau(n) - \tau(n/p) \leq \tau(n/p).\]
From this, we conclude that $p \leq \tau(n/p)$.
\end{proof}

The key idea of the bound of Theorem \ref{thm:primCovNums} is that numbers typically have relatively few divisors, while the largest prime factor of an integer is typically relatively large, and so integers that satisfy the inequality \eqref{eq:primcovineq} are rare.  To make this precise, we will need estimates for the counts of smooth numbers and for numbers with many divisors.  A well-known bound for smooth numbers is that $\psi(x,y)$, the count of integers up to $x$ whose largest prime divisor is at most $y$, satisfies \begin{equation}
    \psi(x,y) = x \exp\left((-1+o(1))\frac{\log x}{\log y} \log\left(\frac{\log x}{\log y} \right)\right) \label{eq:smoothnumapprox} \end{equation}
for a large range of $y$, as $x \to \infty$. De Bruijn \cite{deBruijn51} showed this approximation holds for $\exp\left((\log x)^{5/8+\epsilon}\right) \leq y \leq x$.  This range was subsequently extended to $( \log x)^{1+\epsilon} \leq y \
\leq x$ in \cite{CEP} (though we require only an upper bound, which was already given in a range sufficient for our purpose in \cite{deBruijn51}).
Norton \cite{Norton94} gives an estimate for $\Delta(x,y)$, the number of integers up to $x$ with $\tau(n) > y$, namely 
\begin{equation}
    \Delta(x,y) = x\exp\left((-1+o(1))\frac{\log y}{\log 2}\log \log y \right) \label{eq:largedivapprox}
\end{equation}
holds for $ (\log x)^{2\log 2+\epsilon}\leq y \leq x$.

\begin{proof}[Proof of Theorem \ref{thm:primCovNums}]  Set $y = \exp\left(\sqrt{\log 2 \log x}\right)$. If $n\leq x$ is a primitive covering number then by \eqref{eq:primcovineq} we cannot have both $P^+(n) > y$ and $\tau(n/P^+(n))\leq y$.  Hence, the integer $n$ is counted by at least one of $\psi(x,y)$ or $\Delta(x,y)$.  So, using \eqref{eq:smoothnumapprox} and \eqref{eq:largedivapprox}, we find that
\begin{align*}
    \mathcal{P}_{\mathcal C}(x) &\leq \psi(x,y) + \Delta(x,y) \\ &= x \exp\left((-1+o(1))\frac{\log x}{\log y} \log\left(\frac{\log x}{\log y} \right)\right) + x\exp\left((-1+o(1)\frac{\log y}{\log 2}\log \log y \right) \\
    &= x\exp\left((-\tfrac{1}{2} + o(1))\sqrt{\tfrac{1}{\log 2} \log x}\log \log x\right),
\end{align*}
which we can rewrite in the form of the bound \eqref{eq:primCovBd} in the statement of the theorem. \qedhere
    
\end{proof}

Importantly for our purposes, this bound suffices to show the following.  

\begin{corollary}
    The reciprocal sum $\sum\limits_{c \in \mathcal{P}_{\mathcal{C}}} \frac 1c$ of primitive covering numbers converges. 
\end{corollary}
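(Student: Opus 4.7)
The plan is to deduce convergence of $\sum_{c \in \mathcal{P}_{\mathcal C}} 1/c$ directly from the upper bound on $\mathcal{P}_{\mathcal C}(x)$ supplied by Theorem~\ref{thm:primCovNums}, via partial summation. First I would fix an $\epsilon > 0$ small enough that the constant $\alpha := \frac{1}{2\sqrt{\log 2}} - \epsilon$ is strictly positive (for example $\epsilon = \frac{1}{4\sqrt{\log 2}}$), and choose $x_0$ so that
\[ \mathcal{P}_{\mathcal C}(x) \leq x \exp\bigl(-\alpha \sqrt{\log x}\,\log \log x\bigr) \quad \text{for all } x \geq x_0. \]

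Next, Abel summation applied to the characteristic function of $\mathcal{P}_{\mathcal C}$ yields, for $N \geq x_0$,
\[ \sum_{\substack{c \in \mathcal{P}_{\mathcal C} \\ c \leq N}} \frac{1}{c} = \frac{\mathcal{P}_{\mathcal C}(N)}{N} + \int_{1}^{N} \frac{\mathcal{P}_{\mathcal C}(t)}{t^2}\, dt. \]
The boundary term is bounded by $\exp(-\alpha \sqrt{\log N}\,\log \log N)$ and therefore tends to $0$; the integral over $[1,x_0]$ is trivially finite; so the whole question reduces to showing that the tail $\int_{x_0}^\infty t^{-1} \exp\bigl(-\alpha \sqrt{\log t}\,\log \log t\bigr)\, dt$ converges.

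Finally, the substitution $u = \log t$ recasts this tail as $\int_{\log x_0}^\infty \exp\bigl(-\alpha \sqrt{u}\,\log u\bigr)\, du$, and since $\alpha \sqrt{u}\,\log u \geq 2 \log u$ for all sufficiently large $u$, the integrand is eventually dominated by $u^{-2}$, which is integrable. I do not foresee any real obstacle here: the content of the argument is packed into Theorem~\ref{thm:primCovNums}, whose bound beats every fixed negative power of $\log x$, which is exactly the strength needed to convert the counting estimate into convergence of the reciprocal sum through partial summation.
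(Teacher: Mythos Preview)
Your argument is correct. The paper does not actually write out a proof of this corollary at all; it simply states it as an immediate consequence of Theorem~\ref{thm:primCovNums}, and the partial-summation argument you give is precisely the standard way to fill in that implication.
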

Erd\H{o}s \cite{erdos34} first proved the analogous result for primitive abundant numbers, which he used to give an elementary argument that the natural density of the abundant numbers exists. (The existence of the density had already been shown by Davenport \cite{davenport33} a year prior.)  The convergence of this reciprocal sum of primitive covering numbers then gives, by the same argument as Erd\H{o}s, that the covering numbers have a density.

\begin{corollary}
    The natural density $d(\mathcal{C})\!$ exists.
\end{corollary}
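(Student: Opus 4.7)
The plan is to follow Erd\H{o}s's 1934 argument for abundant numbers essentially verbatim, with the previous corollary (convergence of $\sum_{p \in \mathcal{P}_\mathcal{C}} 1/p$) as the only non-trivial input. First I would use the fact that every covering number is divisible by some primitive covering number to write $\mathcal{C}$ as the (infinite) union $\bigcup_{p \in \mathcal{P}_\mathcal{C}} p\mathbb{Z}_{>0}$. For each threshold $N$, I would then introduce the truncation $\mathcal{C}^{(N)}$ consisting of those positive integers divisible by at least one primitive covering number $p \leq N$. Since this is a finite union of sets of multiples, inclusion--exclusion shows that $\mathcal{C}^{(N)}$ has a natural density
\[
d(\mathcal{C}^{(N)}) = \sum_{\emptyset \neq S \subseteq \mathcal{P}_\mathcal{C} \cap [1,N]} \frac{(-1)^{|S|+1}}{\lcm(S)},
\]
so in particular the density of $\mathcal{C}^{(N)}$ exists for every $N$ and is nondecreasing in $N$.

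Next I would control the tail $\mathcal{C} \setminus \mathcal{C}^{(N)}$. Any integer in this set must be a multiple of some primitive covering number exceeding $N$, and the set of multiples of $p$ has density $1/p$, so a union bound gives
\[
\limsup_{x\to\infty} \frac{(\mathcal{C} \setminus \mathcal{C}^{(N)})(x)}{x} \;\leq\; \sum_{\substack{p \in \mathcal{P}_\mathcal{C} \\ p > N}} \frac{1}{p}.
\]
By the previous corollary this tail tends to $0$ as $N\to\infty$. Combined with $\mathcal{C}^{(N)} \subseteq \mathcal{C}$, this sandwich
\[
d(\mathcal{C}^{(N)}) \;\leq\; \liminf_{x\to\infty} \frac{\mathcal{C}(x)}{x} \;\leq\; \limsup_{x\to\infty} \frac{\mathcal{C}(x)}{x} \;\leq\; d(\mathcal{C}^{(N)}) + \sum_{p \in \mathcal{P}_\mathcal{C},\, p>N} \frac{1}{p}
\]
becomes arbitrarily tight as $N\to\infty$, forcing $\liminf$ and $\limsup$ to agree. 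Hence $d(\mathcal{C})$ exists and equals $\lim_{N\to\infty} d(\mathcal{C}^{(N)})$.

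There is essentially no obstacle here beyond the convergence of the reciprocal sum, which has already been established; the argument is a standard ``thin complement implies density exists'' reduction. The only point that would warrant a short verification in the write-up is the tail bound above, since strictly speaking one is bounding upper density by a sum of densities, which is valid by summing the contributions $(p\mathbb{Z})(x)/x \leq 1/p$ over finitely many $p$ in any prefix of the tail and then letting the prefix grow.
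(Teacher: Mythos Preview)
Your proposal is correct and is precisely the Erd\H{o}s argument the paper invokes; the paper itself does not spell out the details but simply says the existence of $d(\mathcal{C})$ follows ``by the same argument as Erd\H{o}s'' from the convergence of the reciprocal sum. One cosmetic point: in this paper $p$ is reserved for primes, so when writing this up you should use a different letter (e.g.\ $c$) for elements of $\mathcal{P}_{\mathcal{C}}$.
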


Recently Lichtman \cite{lichtman18} has investigated the numerical value of the reciprocal sum of primitive abundant numbers, showing that it is bounded between 0.348 and 0.380.  Due to the difficulties in identifying primitive covering numbers (discussed further in Section \ref{sec:Lowbd} below), obtaining similar bounds for the reciprocal sum of primitive covering numbers is likely to be a much more difficult problem.  The sum of the reciprocals of the known covering numbers up to one million, listed in Table \ref{tab:primcovnums}, shows that the reciprocal sum of primitive covering numbers is at least 0.12380.

\section{Upper Bounds for \texorpdfstring{$c(n)$}{c(n)}}

We obtain upper bounds for the density of the set of covering numbers following the methods that have been developed to bound the density of the abundant numbers.  We introduce two functions which will allow us to compare the set of covering numbers to the set of abundant numbers. For a positive integer $n$ we first define $r(n)$ to be the maximum cardinality of the set of residues that can be covered by a set of congruences with distinct moduli greater than 1, each dividing $n$, and then set 
    \[c(n) \coloneqq 1 + \frac{r(n)}n.\]
    
Thus, by definition, an integer $n$ is a covering number if and only if $r(n)=n$ and $c(n) = 2$.  This function $c(n)$ is naturally related to the sum of divisors function as seen from the following lemma.

\begin{lemma} \label{lem:cboundh} For each $n$ we have $c(n) \leq \frac{\sigma(n)}{n} = h(n)$.
\end{lemma}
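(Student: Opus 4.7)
The plan is to bound $r(n)$ directly by a union-bound-type argument over the moduli of any admissible system of congruences, and then translate this into the desired inequality for $c(n)$.

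First I would fix a set of congruences $a_i \pmod{m_i}$ with distinct moduli $m_i > 1$, each dividing $n$, that achieves the maximum $r(n)$. The key observation is that a single congruence $a_i \pmod{m_i}$ covers exactly $n/m_i$ residue classes modulo $n$, because residues modulo $m_i$ lift to $n/m_i$ residues modulo $n$ when $m_i \mid n$. Therefore, by the union bound (ignoring any overlaps between different congruences),
\[
r(n) \;\leq\; \sum_{i=1}^{k} \frac{n}{m_i}.
\]

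Next I would use the fact that the $m_i$ are distinct divisors of $n$ greater than $1$, so the sum on the right is at most the sum over all such divisors:
\[
\sum_{i=1}^{k} \frac{n}{m_i} \;\leq\; \sum_{\substack{d \mid n \\ d > 1}} \frac{n}{d} \;=\; \sum_{\substack{d' \mid n \\ d' < n}} d' \;=\; \sigma(n) - n,
\]
where in the middle step I substitute $d' = n/d$, which is a bijection on the set of divisors of $n$ sending $d = 1$ to $d' = n$. Dividing by $n$ and adding $1$ gives
\[
c(n) \;=\; 1 + \frac{r(n)}{n} \;\leq\; 1 + \frac{\sigma(n) - n}{n} \;=\; \frac{\sigma(n)}{n} \;=\; h(n),
\]
which is exactly the claimed bound.

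There is no real obstacle here: the proof is essentially a one-line union bound combined with the elementary divisor identity $\sum_{d \mid n} n/d = \sigma(n)$. The only thing to be careful about is making sure the moduli are constrained to be greater than $1$ (the modulus $m = 1$ is disallowed in the definition of $r(n)$, which is what ensures we drop the $d = 1$ term and obtain $\sigma(n) - n$ rather than $\sigma(n)$, yielding the clean inequality $c(n) \leq h(n)$ rather than something weaker).
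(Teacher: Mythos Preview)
Your proof is correct and follows essentially the same approach as the paper: both argue that each congruence with modulus $d\mid n$ covers exactly $n/d$ residues modulo $n$, apply the union bound $r(n)\le \sum_{d\mid n,\,d>1} n/d = \sigma(n)-n$, and conclude $c(n)\le h(n)$. The paper compresses these steps into a single displayed line, but the content is identical.
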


\begin{proof}
Each congruence class $k \pmod d$ with $d|n$ contains exactly $n/d$ residue classes modulo $n$. Therefore,
\begin{equation*}
 c(n) = 1 + \frac{r(n)}n \leq 1+ \frac{1}{n}\sum_{\substack{d|n \\d > 1}} \frac{n}{d} = \frac 1n \sum _{d|n} d = \frac{\sigma(n)}{n}. \qedhere
\end{equation*}
    
\end{proof}

Using a famous result of Newman \cite{newman} that any distinct covering system covers some congruence class multiple times, we can strengthen this, when $n$ is a covering integer, to $2=c(n)<h(n)$ and so we see that any covering number is abundant. 
\begin{corollary}
The covering numbers are contained in the abundant numbers, i.e.\ if $n$ is a covering number then $\sigma(n) > 2n$.
\end{corollary}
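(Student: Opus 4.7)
The plan is to strengthen the inequality in the proof of Lemma \ref{lem:cboundh} in the covering case, using Newman's theorem exactly where the paper suggests it. The inequality $r(n) \leq \sum_{d|n, d>1} n/d$ comes from double-counting: each congruence class $k \pmod d$ with $d \mid n, d>1$ contributes $n/d$ residues modulo $n$, but a given residue may be accounted for by several congruences at once. When $n$ is a covering number, Newman's result supplies the slack needed to make this inequality strict.

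Concretely, I would start by invoking the definition: since $n$ is a covering number, there is a distinct covering system with moduli $d_1, \ldots, d_k$, each a divisor of $n$ greater than $1$, whose progressions $a_i \pmod{d_i}$ together cover every integer. Reducing modulo $n$, the $i$th progression accounts for exactly $n/d_i$ of the $n$ residues mod $n$, so
\[
\sum_{i=1}^{k} \frac{n}{d_i} \;=\; \sum_{r=0}^{n-1} \#\bigl\{i : r \equiv a_i \!\!\!\! \pmod{d_i}\bigr\}.
\]
Every residue $r$ is covered at least once (this is what makes it a covering system), and by Newman's theorem at least one residue is covered at least twice. Hence the right-hand side is at least $n+1$.

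From here the conclusion is immediate. Dividing by $n$ and noting that $\{d_1,\ldots,d_k\}$ is a subset of the nontrivial divisors of $n$, I would write
\[
\frac{\sigma(n)}{n}-1 \;=\; \sum_{\substack{d \mid n \\ d > 1}} \frac{1}{d} \;\geq\; \sum_{i=1}^{k} \frac{1}{d_i} \;\geq\; 1 + \frac{1}{n} \;>\; 1,
\]
so $\sigma(n) > 2n$ and $n$ is abundant. There is no real obstacle: Newman's theorem is the only nontrivial ingredient, and it is being applied in its most direct form to upgrade the weak bound $c(n)=2\leq h(n)$ from Lemma \ref{lem:cboundh} into the strict inequality $2 < h(n)$.
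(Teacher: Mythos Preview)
Your argument is correct and is exactly the approach the paper has in mind: you make explicit the double-counting behind Lemma~\ref{lem:cboundh} and apply Newman's theorem to force strict inequality, yielding $2=c(n)<h(n)$. The paper states this in one line without writing out the residue-counting identity, but the content is the same.
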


This was previously observed by Sun \cite{sun07}.  It follows from this corollary that the density of covering numbers is at most the density of abundant numbers, and so using the upper bound for the abundant numbers obtained by Kobayashi \cite{kobayashiThesis} we have that 
\begin{equation} \label{eq:firstabundantupperbound}
 d(\mathcal{C}) \leq d(\mathcal{A}) \leq 0.24765.\end{equation} 

We can improve the upper bound from Lemma \ref{lem:cboundh} in a variety of ways.  First we obtain the following improved bound for $c(n)$.  

\begin{theorem} \label{thm:cleqch}
For any coprime integers $m,n$ we have 
\[c(mn) \leq c(m)h(n) .\]
\end{theorem}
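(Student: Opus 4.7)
The plan is to apply the Chinese Remainder Theorem to decompose a distinct covering system of $\mathbb{Z}/mn\mathbb{Z}$ by its $n$-part structure and bound contributions layer by layer. First I would fix an optimal distinct covering $\mathcal{K}$ with moduli $d > 1$ dividing $mn$ attaining $r(mn)$. Because $\gcd(m,n) = 1$, every $d \in \mathcal{K}$ factors uniquely as $d = d_1 d_2$ with $d_1 \mid m$, $d_2 \mid n$. Grouping $\mathcal{K}$ by its $n$-part, let $\mathcal{K}_{d_2}$ be the subset of congruences with that second factor; within each $\mathcal{K}_{d_2}$, the first-factors $d_1$ are distinct divisors of $m$ (since the original moduli are distinct).

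Next, identifying $\mathbb{Z}/mn \cong \mathbb{Z}/m \times \mathbb{Z}/n$, I would analyze, for each $d_2 \mid n$, the set $A_{d_2} \subseteq \mathbb{Z}/m \times \mathbb{Z}/d_2$ covered by $\mathcal{K}_{d_2}$ (a full preimage of which lies in $\mathbb{Z}/mn$ with multiplicity $n/d_2$). Column by column across $\mathbb{Z}/d_2$, each congruence of $\mathcal{K}_{d_2}$ selects exactly one column and covers a residue class mod $d_1$ in $\mathbb{Z}/m$; within a column the contributing $d_1$'s are still distinct. If the trivial modulus $d_1 = 1$ occurs in a column (which requires $d_2 > 1$ and that the pure congruence mod $d_2$ lies in $\mathcal{K}$), that column is fully covered with $m$ residues; otherwise its coverage is at most $r(m)$ by the definition of $r(m)$. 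This yields the layer bound $|A_{d_2}| \leq r(m)\, d_2 + (m - r(m))\, f_{d_2}$, where $f_{d_2} = \mathbf{1}[d_2 > 1 \text{ and the congruence mod } d_2 \text{ lies in } \mathcal{K}]$.

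Combining these layer bounds carefully should yield
\[
r(mn) + mn \leq (m + r(m))\,\sigma(n),
\]
which on division by $mn$ is exactly $c(mn) \leq c(m)\, h(n)$: the factor $m + r(m) = m\,c(m)$ comes from the columnwise analysis, and the $\sigma(n) = \sum_{d_2 \mid n} n/d_2$ from summing the layer contributions lifted by their $n/d_2$ factors.

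The hardest step will be assembling the global bound without over-counting, since the naive sum $\sum_{d_2 \mid n} (n/d_2)\,|A_{d_2}|$ grossly overestimates $r(mn)$ (each covered residue may be reached by several $\mathcal{K}_{d_2}$'s). The resolution must exploit both the joint distinctness of the moduli $d_1 d_2 \in \mathcal{K}$ and the fact that the set of $d_2 > 1$ with $f_{d_2} = 1$ is itself a distinct-modulus covering of $\mathbb{Z}/n$, whose size is bounded via Lemma~\ref{lem:cboundh} applied to $n$. A further subtlety is that induced congruences at a fixed fiber $r_2 \in \mathbb{Z}/n$ coming from different $d_2$'s can share the same $d_1$ with distinct residues; the argument must absorb these apparent duplicates into the $(m - r(m))\,f_{d_2}$ budget rather than treating them as additional $r(m)$-style contributions.
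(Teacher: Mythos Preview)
Your decomposition by $n$-part $d_2$ is the same skeleton as the paper's, but your column-by-column analysis in $\mathbb{Z}/m \times \mathbb{Z}/d_2$ leads you to the layer bound $|A_{d_2}| \leq r(m)\, d_2 + (m - r(m)) f_{d_2}$, and as you yourself note, the union-bound sum $\sum_{d_2 \mid n} (n/d_2)\,|A_{d_2}|$ then yields only
\[
r(mn) \ \le\ n\,r(m)\,\tau(n) + (m - r(m))(\sigma(n) - n),
\]
which is strictly weaker than the target $(m + r(m))\,\sigma(n) - mn$ whenever $2\sigma(n) < n(\tau(n)+1)$ (already at $n = 3$). Your final paragraph acknowledges this gap but does not close it: the appeals to ``joint distinctness'' and ``absorbing into the $(m - r(m)) f_{d_2}$ budget'' are not made concrete, and I do not see a mechanism along those lines that recovers the missing factor. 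Note too that your diagnosis is slightly off: the sum $\sum_{d_2} (n/d_2)|A_{d_2}|$ \emph{is} a legitimate upper bound for $r(mn)$ --- it is just the union bound over the $d_2$-layers --- so the issue is not over-counting but that your per-layer estimate loses a factor of $d_2$ on the dominant $r(m)$ term.

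The paper's proof avoids all of this machinery. It never looks at columns or introduces indicators; it simply splits the moduli $\ell d$ (with $\ell\mid m$, $d\mid n$) into three groups --- those with $d = 1$; those with $\ell = 1$, $d > 1$; and those with $\ell > 1$, $d > 1$ --- and bounds the covered set of each directly, then sums. For the mixed group with a fixed $d > 1$ the paper asserts the bound $\frac{n}{d}\, r(m)$ (rather than your $n\, r(m)$), reasoning that the $m$-projections of these congruences form a distinct system with moduli $\ell > 1$ dividing $m$ and hence cover at most $r(m)$ residues modulo $m$. With this sharper per-group estimate the three pieces sum and factor exactly as $\bigl(1+\tfrac{r(m)}{m}\bigr)\bigl(1+\sum_{d\mid n,\,d>1}\tfrac{1}{d}\bigr)=c(m)\,h(n)$. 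So the missing ingredient in your proposal is precisely the paper's tighter bound on the mixed group; your more elaborate column bookkeeping replaces it with something weaker by a factor of $d$ and then cannot compensate downstream.
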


\begin{proof}
First, since the divisors of $m$ (exceeding 1) can cover at most $r(m)$ residues modulo $m$, those same divisors can cover at most $nr(m)$ residues modulo $nm$.  More trivially, each divisor $d$ (exceeding 1) of $n$ can cover at most $\frac{mn}{d}$ residues modulo $nm$. 

Now, fix a divisor $d$ of $n$, $d>1$, and consider all the moduli of the form $d\ell$ with $\ell|m$ and $\ell>1$. Since, as before, these divisors of $m$ can cover at most $r(m)$ residues modulo $m$, the corresponding congruences modulo $d\ell$ can cover at most $\frac{n}{d}r(m)$ residues modulo $nm$.   Thus, summing each of these contributions we have \begin{align*}
    c(mn) = 1 + \frac{r(mn)}{mn} &\leq  1+\frac{1}{mn}\Bigg(nr(m) + \sum_{\substack{d|n\\d>1}} \left(\frac{mn}{d}+\frac{nr(m)}{d}\right)\Bigg) \\
    &= 1+ \frac{r(m)}{m} + \sum_{\substack{d|n\\d>1}}\frac{1}{d} + \frac{r(m)}{m} \sum_{\substack{d|n\\d>1}}\frac{1}{d} \\
    &=\left(1+\frac{r(m)}{m}\right)\Bigg(1+\sum_{\substack{d|n\\d>1}}\frac{1}{d}\Bigg) \, = \ c(m)h(n). \qedhere
\end{align*}
\end{proof}

We conjecture that this can be strengthened.
\begin{conjecture}
The function $c(n)$ is sub-multiplicative, that is $c(nm) \leq c(n)c(m)$ whenever $\gcd(n,m)=1$. 
\end{conjecture}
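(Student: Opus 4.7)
The plan is to reformulate the conjecture in additive form and attack it via the Chinese Remainder Theorem. Since $c(k) = 1 + r(k)/k$, the inequality $c(nm) \leq c(n)c(m)$ is equivalent to
\[ r(nm) \leq m\,r(n) + n\,r(m) + r(n)\,r(m). \]
Via the CRT isomorphism $\mathbb{Z}/nm \cong \mathbb{Z}/n \times \mathbb{Z}/m$, each chosen congruence class modulo a divisor $d\ell > 1$ of $nm$ (with $d \mid n$ and $\ell \mid m$) pulls back to a rectangle $R \times C$ in which $R$ is a residue class mod $d$ and $C$ is a residue class mod $\ell$, and any covering of $\mathbb{Z}/nm$ becomes a union of such rectangles whose moduli $d\ell$ are all distinct. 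I would classify the rectangles into three families --- pure-$n$ ($\ell = 1$, so $C = \mathbb{Z}/m$), pure-$m$ ($d = 1$, so $R = \mathbb{Z}/n$), and mixed ($d,\ell > 1$) --- and try to match each of the three summands above to one family.

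For the pure families, immediate bounds apply: the pure-$n$ rectangles use distinct divisors of $n$ greater than $1$, so their $d$-classes cover at most $r(n)$ rows and contribute at most $m\cdot r(n)$ points; symmetrically the pure-$m$ rectangles contribute at most $n\cdot r(m)$ points. Observing that $mr(n) + nr(m) + r(n)r(m) \geq nm$ precisely when $c(n)c(m) \geq 2$, the conjecture is already implied by the trivial bound $r(nm) \leq nm$ in that regime. The real work is therefore in the case $c(n)c(m) < 2$, where both $r(n)/n$ and $r(m)/m$ are small, and one must show that the mixed rectangles contribute at most $r(n)r(m)$ additional points beyond the pure contributions. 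My first attempt would be a row-by-row analysis: for each $a \in \mathbb{Z}/n$, study $M_a \subseteq \mathbb{Z}/m$, the columns covered by mixed rectangles whose $d$-class contains $a$, and try to show $\sum_a |M_a| \leq r(n)r(m)$ using the distinct-$d$ structure among mixed divisors sharing a common $\ell$.

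The main obstacle is that the restricted family of mixed rectangles containing a given row $a$ generally fails to have distinct $m$-moduli: two mixed divisors $d_1\ell$ and $d_2\ell$ (same $\ell$, different $d$) can both have their $d$-classes containing $a$, contributing two different classes mod $\ell$ to $M_a$, so $|M_a|$ is not controlled by $r(m)$ directly. Small examples (e.g.\ $n=12,\, m=5$) show that the raw sum of mixed-rectangle sizes can strictly exceed $r(n)r(m)$, so any proof must leverage overlap with the pure rectangles through a global double-counting rather than a clean per-type bound. Two promising avenues are: (i) first establish the conjecture when $m$ is a prime power --- where only one repeated-$\ell$ structure can occur at each divisor level --- and then bootstrap to arbitrary $m$ by induction on $\omega(m)$; (ii) sharpen the target to $r(nm) + f(n,m) \leq mr(n) + nr(m) + r(n)r(m)$ for some non-negative $f(n,m)$ that accounts for the inevitable overlap between mixed and pure contributions. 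Before investing significant effort in such a proof, a thorough computational search for counterexamples across small coprime pairs would be prudent.
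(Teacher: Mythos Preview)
The statement you are addressing is a \emph{conjecture} in the paper, not a theorem: the paper offers no proof, only the remark that the inequality has been verified numerically for all coprime $m,n$ with $mn<10\,000$. There is therefore no ``paper's own proof'' against which to compare your attempt.

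Your proposal is not a proof either, and you are candid about this. The additive reformulation $r(nm)\le m\,r(n)+n\,r(m)+r(n)\,r(m)$ and the CRT rectangle picture are correct and natural, and the reduction to the regime $c(n)c(m)<2$ via the trivial bound $r(nm)\le nm$ is a genuine simplification. But the obstacle you isolate is exactly the point at which the argument stalls: when two mixed divisors $d_1\ell$ and $d_2\ell$ share the same $m$-part $\ell$, their row-restrictions can collide, so the per-row column sets $M_a$ are not governed by $r(m)$. Your own small example shows that the naive term-by-term matching fails, and neither of the two ``promising avenues'' you sketch (prime-power induction, or inventing an overlap correction $f(n,m)$) comes with any mechanism for actually carrying it out. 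In short, you have located the difficulty that makes this a conjecture rather than a lemma, but you have not supplied the missing idea. Your closing suggestion to search for counterexamples is already subsumed by the paper's numerical verification up to $mn<10\,000$.
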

This conjecture has been verified numerically for all $m,n$ with $mn<10\,000$.

\subsection{Almost-covering numbers}
As we will see below, our study of covering numbers (especially primitive covering numbers) will lead us to consider a closely related class of numbers \cite[A160560]{oeis} which, while not covering numbers, are as close as possible to being covering numbers.     

\begin{definition}
    We call an integer $n$ an \textit{almost-covering number} if $r(n)=n-1$.  In particular, $n$ is an almost-covering number if it is not a covering number, but it is possible to cover all but one of the residue classes modulo $n$ with distinct moduli corresponding to the divisors of $n$ greater than 1.  A collection of arithmetic progressions with distinct moduli that covers all but one residue modulo $n$ is an \textit{almost-covering}.
\end{definition}

Note that the integer 1 is, by our definition (somewhat vacuously), an almost-covering number.  Many of Sun's \cite{sun07} arguments implicitly use the idea of almost-covering numbers without defining them. A strengthening of his arguments gives the following.

\begin{theorem}\label{thm:sunalmostcover}
    Suppose $n>1$ factors as \begin{equation}n=p_1^{\alpha_1} p_2^{\alpha_2}\cdots p_k^{\alpha_k} \label{eq:factorn} \end{equation} with $2=p_1<p_2<\cdots<p_k$.  If, for each $1<i\leq k$, we have 
    \begin{equation} p_i \ =\ (\alpha_1+1)(\alpha_2+1)\cdots(\alpha_{i-1}+1) +1 \ = \ \tau\left(p_1^{\alpha_1} p_2^{\alpha_2}\cdots p_{i-1}^{\alpha_{i-1}}\right) +1 \label{eq:sunalmostcovercondition} \end{equation} 
    then $n$ is an almost-covering number.
\end{theorem}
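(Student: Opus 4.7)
I would prove Theorem \ref{thm:sunalmostcover} by induction on $k$, simultaneously establishing $r(n_k)\ge n_k-1$ (by explicit construction) and $r(n_k)\le n_k-1$ (by a counting argument ruling out a full covering); together these give $r(n_k)=n_k-1$.

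For the base case $k=1$, $n_1=2^{\alpha_1}$, the classical doubling system
\[
0\pmod 2,\ 1\pmod 4,\ 3\pmod 8,\ \ldots,\ 2^{\alpha_1-1}-1\pmod{2^{\alpha_1}}
\]
uses distinct divisors of $n_1$ as moduli and covers all residues except $2^{\alpha_1}-1$. The matching upper bound is immediate from $\sum_{m\mid 2^{\alpha_1},\,m>1}\tfrac{1}{m}=1-2^{-\alpha_1}<1$, so the union of any distinct system with these moduli has strictly fewer than $n_1$ residues.

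For the inductive step, write $n_k=n_{k-1}p_k^{\alpha_k}$ and use the hypothesis $p_k=\tau(n_{k-1})+1$, which produces exactly $p_k-1$ divisors of $n_{k-1}$. For the lower bound, I would lift the inductive almost-covering of $n_{k-1}$ modulo $n_k$; this covers all but the $p_k^{\alpha_k}$ residues in some slab $\{x:x\equiv r\pmod{n_{k-1}}\}$. The $(p_k-1)\alpha_k$ new distinct moduli $dp_k^j$ (with $d\mid n_{k-1}$, $1\le j\le \alpha_k$) give us the right flexibility: a congruence $a\pmod{dp_k^j}$ with $a\equiv r\pmod d$ covers precisely $p_k^{\alpha_k-j}$ residues of the slab, forming a single residue class modulo $p_k^j$ in a natural $j$-parameter coordinate on the slab. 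Filling the slab then reduces to the classical prime-power doubling: with $p_k-1$ slots for each of the moduli $p_k,p_k^2,\ldots,p_k^{\alpha_k}$ in $\mathbb{Z}/p_k^{\alpha_k}\mathbb{Z}$, one can cover $\sum_{j=1}^{\alpha_k}(p_k-1)p_k^{\alpha_k-j}=p_k^{\alpha_k}-1$ residues, leaving exactly one uncovered.

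For the upper bound, I would partition the moduli of any distinct system on divisors of $n_k$ into $M_1=\{m:m\mid n_{k-1}\}$ and $M_2=\{m:p_k\mid m\}$. Since each $m\in M_1$ divides $n_{k-1}$, any $M_1$-congruence class either contains an entire $n_{k-1}$-slab or is disjoint from each; thus the $M_1$-system reduces to a distinct system modulo $n_{k-1}$ and by induction covers at most $n_{k-1}-1$ residue classes. Pick an uncovered slab $r_0\pmod{n_{k-1}}$; its $p_k^{\alpha_k}$ residues must be covered entirely by $M_2$. But the same count as in the lower bound shows $M_2$-coverage of any single slab is at most $(p_k-1)\sum_{j=1}^{\alpha_k}p_k^{\alpha_k-j}=p_k^{\alpha_k}-1$, so at least one residue of the slab is missed, giving $r(n_k)\le n_k-1$.

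The main obstacle is the upper bound, which hinges on the ``all or nothing'' observation that $M_1$-congruences act uniformly on each $n_{k-1}$-slab, allowing a clean reduction modulo $n_{k-1}$ so the inductive hypothesis applies. The lower bound is essentially a direct generalization of the prime-power doubling trick, but one must carefully verify that the $(p_k-1)\alpha_k$ new moduli are mutually distinct and distinct from the inductive system's moduli, and that the shift modulo $p_k^j$ can be chosen freely for each new congruence once its shift modulo $d$ is pinned to $r$.
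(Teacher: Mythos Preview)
Your proposal is correct and follows essentially the same route as the paper: induction on $k$, with the inductive step splitting the moduli into those dividing $n_{k-1}$ and those divisible by $p_k$, then using the count $(p_k-1)\sum_{j=1}^{\alpha_k}p_k^{\alpha_k-j}=p_k^{\alpha_k}-1$ for the remaining slab in both directions. The one minor difference is in the upper bound: the paper invokes a swap lemma (Lemma~\ref{lem:wlog-almost-cover}) to assume the $M_1$-part is already an almost-covering of $n_{k-1}$, whereas you argue directly that any $M_1$-system covers at most $r(n_{k-1})=n_{k-1}-1$ slabs by the inductive hypothesis, which is slightly more self-contained here.
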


Henceforth we refer to almost-covering numbers in the form of this theorem as \textit{Sun-almost-covering} numbers.  We will also frequently need to reference the largest (greedily constructed using small prime divisors) Sun-almost-covering divisor of an integer $n$, and we will denote this divisor $\ell = \ell(n)$.  We define this divisor formally below.\footnote{Note that in certain cases where $n$ is a covering number $\ell(n)$ may not be the largest divisor of $n$ that is Sun-almost-covering.  For example, taking $n=2 \times 3^4 \times 7 \times 11$, we find, by applying Definition \ref{def:sacd}, that $\ell(n) = 2 \times 3^4$, even though $d=2 \times 3^4\times 11$ is a divisor of $n$ that is Sun-almost-covering.} 

\begin{definition} \label{def:sacd}
    Write the factorization of $n$ as in \eqref{eq:factorn}. Set $\ell(n)=1$ if $n$ is odd and, if $n$ is even, take $j\leq k$ to be the largest index such that $p_i$ satisfies \eqref{eq:sunalmostcovercondition} for every $i\leq j$. We then define the \textit{Sun-almost-covering-divisor} of $n$ to be \[\ell(n) \coloneqq \begin{cases}
    1 & \text{$n $ is odd} \\
    2^{\alpha_1}p_2^{\alpha_2}\cdots p_j^{\alpha_j} & \text{$n $ is even.}
\end{cases} \]  
\end{definition}

Almost-covering numbers play a fundamental role in the distribution of primitive covering numbers.  For example, a short calculation shows the following. 
\begin{corollary} \label{cor:covfromalmostcov}
    If $n$ is an almost-covering number and $P^+(n)<p\leq \tau(n)$ then $pn$ is a covering number.
\end{corollary}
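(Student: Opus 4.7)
The plan is to start from an almost-covering of $n$ and show how to extend it to a full distinct covering system whose moduli divide $pn$. Let $\mathcal{S}$ be a collection of congruences with distinct moduli dividing $n$ (each modulus greater than $1$) that covers every residue class modulo $n$ except for one class, say $a \pmod n$. Since $p \nmid n$ (because $p > P^+(n)$), the uncovered residues modulo $pn$ are exactly the $p$ lifts $a + in \pmod{pn}$ for $i=0,1,\ldots,p-1$, and our goal is to cover these $p$ residues using new congruences whose moduli divide $pn$ but not $n$, i.e.\ moduli of the form $pd$ with $d \mid n$.

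The key observation I would establish next is that for each divisor $d$ of $n$, a single congruence $x \equiv b \pmod{pd}$ can be chosen to cover exactly one prescribed residue among the $p$ uncovered ones. Indeed, if we insist that $b \equiv a \pmod d$ (so the congruence has any chance of hitting residues $\equiv a \pmod n$), then the condition $a + in \equiv b \pmod{pd}$ reduces to $i \cdot (n/d) \equiv (b-a)/d \pmod p$. Because $\gcd(n/d, p) = 1$, as $b$ ranges over the $p$ residues $\pmod{pd}$ that are $\equiv a \pmod d$, the index $i$ it hits runs through all of $\{0,1,\ldots,p-1\}$ exactly once.

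Now I would use the hypothesis $p \leq \tau(n)$: pick any $p$ distinct divisors $d_0,d_1,\ldots,d_{p-1}$ of $n$ and, by the previous paragraph, choose a residue $b_j \pmod{p d_j}$ so that the congruence $x \equiv b_j \pmod{pd_j}$ covers precisely the uncovered residue $a + jn \pmod{pn}$. Appending these $p$ new congruences to $\mathcal{S}$ yields a covering system for $pn$. It remains distinct: the new moduli $pd_0,\ldots,pd_{p-1}$ are pairwise distinct since the $d_j$ are, and none of them divides $n$ (as $p \nmid n$), so none of them appears in $\mathcal{S}$. Every modulus in the combined system divides $pn$, proving $pn$ is a covering number.

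I don't anticipate a serious obstacle here; the only place that requires care is verifying that the freedom in choosing $b_j$ really gives a bijection onto the $p$ uncovered lifts, which is a direct application of the fact that $n/d$ is invertible modulo $p$. The remainder is bookkeeping: confirming distinctness of moduli and that each new modulus exceeds $1$.
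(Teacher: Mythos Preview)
Your proof is correct and is precisely the ``short calculation'' the paper alludes to without spelling out. The argument---lifting the single missed residue class to $p$ classes modulo $pn$, then hitting each one with a congruence of modulus $pd_j$ for distinct divisors $d_j\mid n$ using the invertibility of $n/d_j$ modulo $p$---is exactly the intended route, and your verification of distinctness and of the bijection between choices of $b_j$ and the $p$ lifts is clean.
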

Note that $pn$ in the corollary above may not be a primitive covering number.  For example, $n=2^9\times 11$ is a Sun-almost-covering number, so, since $13<\tau(n)=20$, we find that $13n$ is covering, but it is not a primitive covering number since $2^8\times 11\times 13$ is also a covering number, as shown in \cite{Jones17}.  Applying Corollary \ref{cor:covfromalmostcov} to the special case of the almost-covering numbers obtained from Theorem \ref{thm:sunalmostcover}, we recover Theorem 1.1 of \cite{sun07}.  There are almost-covering numbers which are not Sun-almost-covering. The smallest we are aware of is $n=2^{10} \times 13 \times 17$.  Using this and Corollary \ref{cor:covfromalmostcov} we see, for example, that $2^{10} \times 13 \times 17 \times 41$ is a covering number.  We can even use a modified version of the proof of Theorem \ref{thm:strongersun} to show that it is a primitive covering number, which gives another counterexample to Sun's conjecture.

We can also now prove Theorem \ref{thm:strongersun}.

\begin{proof}[Proof of Theorem \ref{thm:strongersun}]
Suppose $n$ satisfies the first two hypotheses of Theorem \ref{thm:sunPrimitive}.  Then $n=\ell p_{k+1}$ where $\ell = p_1^{\alpha_1}p_2^{\alpha_2}\cdots p_k^{\alpha_k}$ is a Sun-almost-covering number and so $n$ is a covering number by Corollary \ref{cor:covfromalmostcov}.  

Thus, it remains only to show that $n$ is a primitive covering number, for which we use the new condition (3) of Theorem \ref{thm:strongersun}. Suppose for a contradiction that $n$ were not a primitive covering number.  Then $n$ has a primitive covering divisor $d|n$.  It must be the case that $p_{k+1}|d$ since otherwise $d|\ell$, and it isn't possible for a divisor of an almost-covering number to be covering.  Thus, since $d$ is a proper divisor of $n$, there must be some $p_i$, with $i\leq k$, such that $p_i$ divides $d$ fewer times than $n$.  But then, using the new Condition (3),  \[\tau\left(\frac{d}{p_{k+1}}\right) \leq \tau\left(\frac{n}{p_ip_{k+1}}\right)<p_{k+1}\]
but, using Lemma \ref{lem:largestprime},  this means $d$ is not a primitive covering number, a contradiction.
\end{proof}

Before we give a proof of Theorem \ref{thm:sunalmostcover}, we need the following Lemma which will be used several times in the remainder of the paper.

\begin{lemma} \label{lem:wlog-almost-cover}
If an integer $n$ factors as $n=\ell m$ with $\ell$, $m$ coprime and $\ell$ is an almost-covering number, then there exists a set of congruences using distinct moduli dividing $n$ which covers a maximal number, $r(n)$, of the residues modulo $n$ and the subset of congruences whose moduli divide $\ell$ form an almost-covering of the residues modulo $\ell$.  
\end{lemma}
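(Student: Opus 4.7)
The plan is to start from any covering $\mathcal{S}$ of $\mathbb{Z}/n\mathbb{Z}$ that achieves the maximum $r(n)$ using distinct divisor-moduli of $n$ (each exceeding $1$) and---if the sub-collection of $\mathcal{S}$ whose moduli divide $\ell$ is not already an almost-covering of $\mathbb{Z}/\ell\mathbb{Z}$---replace that sub-collection with a judiciously chosen almost-covering $\mathcal{T}$ of $\mathbb{Z}/\ell\mathbb{Z}$ (which exists by hypothesis), arranging matters so that the total number of covered residues modulo $n$ does not drop.

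To set up, use $\gcd(\ell,m)=1$ to identify $\mathbb{Z}/n\mathbb{Z} \cong \mathbb{Z}/\ell\mathbb{Z} \times \mathbb{Z}/m\mathbb{Z}$ via CRT and split $\mathcal{S} = \mathcal{S}_\ell \sqcup \mathcal{S}_m$ according to whether the modulus divides $\ell$ or has a nontrivial factor from $m$. A congruence whose modulus divides $\ell$ imposes a condition only on the $\ell$-coordinate, so $\mathcal{S}_\ell$ covers a rectangle $L \times \mathbb{Z}/m\mathbb{Z}$ with $L \subseteq \mathbb{Z}/\ell\mathbb{Z}$; since $\ell$ is not a covering number, $|L| \leq \ell - 1$. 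Let $M \subseteq \mathbb{Z}/\ell\mathbb{Z} \times \mathbb{Z}/m\mathbb{Z}$ denote the set covered by $\mathcal{S}_m$, and for each $x$ set $M_x = \{y : (x,y) \in M\}$, so that
\begin{equation*}
r(n) \;=\; |L|\,m + \sum_{x \notin L} |M_x|.
\end{equation*}
Let $\mu = \max_{x} |M_x|$, achieved at some $x^* \in \mathbb{Z}/\ell\mathbb{Z}$.

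To construct $\mathcal{T}$, take any almost-covering of $\mathbb{Z}/\ell\mathbb{Z}$ by distinct divisors of $\ell$ greater than $1$ and translate every congruence in it by a common integer so that the single uncovered residue becomes $x^*$. The moduli of $\mathcal{T}$ divide $\ell$ and are therefore disjoint from those in $\mathcal{S}_m$ (which each contain a factor from $m$), so $\mathcal{S}' := \mathcal{T} \cup \mathcal{S}_m$ uses distinct divisor-moduli of $n$ and covers exactly $(\ell-1)m + |M_{x^*}| = (\ell-1)m + \mu$ residues. Using $|M_x| \leq \mu$ for every $x$ and $\mu \leq m$,
\begin{equation*}
(\ell-1)m + \mu - r(n) \;\geq\; (\ell-1)m + \mu - |L|\,m - (\ell - |L|)\mu \;=\; (\ell - 1 - |L|)(m - \mu) \;\geq\; 0,
\end{equation*}
so $\mathcal{S}'$ also achieves $r(n)$, and by construction its sub-collection of congruences with moduli dividing $\ell$ is the almost-covering $\mathcal{T}$.

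The delicate step---really the only one that needs thought---is the choice of $x^*$: an arbitrarily chosen $\mathcal{T}$ could leave uncovered a column $x_0$ in which $\mathcal{S}_m$ was contributing many residues, while absorbing columns inside $L$ where $\mathcal{S}_m$ was contributing nothing, potentially losing more than it gains. Choosing $x^*$ to be the column where $\mathcal{S}_m$ already covers the most residues neutralizes this loss, as the displayed inequality confirms. The remaining verifications---that any prescribed residue modulo $\ell$ can be made the uncovered one via a common shift of an existing almost-covering, and the trivial edge case $\ell = 1$ in which $\mathcal{T} = \emptyset$ and the conclusion is vacuous---are routine.
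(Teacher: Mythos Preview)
Your proof is correct, but it works harder than necessary. The paper's argument makes a sharper choice of the uncovered residue: rather than choosing $x^*$ to maximize $|M_{x^*}|$, it simply picks the uncovered residue $a$ to be one that was \emph{already} missed by $\mathcal{S}_\ell$ (such an $a$ exists since $|L|\le \ell-1$). With that choice the new $\ell$-part covers $\mathbb{Z}/\ell\mathbb{Z}\setminus\{a\}\supseteq L$, so every column previously covered by $\mathcal{S}_\ell$ is still covered, and since $\mathcal{S}_m$ is unchanged the new system covers a superset of what $\mathcal{S}$ covered. This dispenses entirely with the inequality computation and the optimization over columns: the ``delicate step'' you flag is only delicate because you allowed $x^*$ to land inside $L$. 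Your route is fine and the displayed inequality is valid, but the paper's monotonicity observation is the cleaner idea.
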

\begin{proof}
    Fix a set $C$ of congruences with distinct moduli dividing $n$ which cover $r(n)$ residues modulo $n$, and consider those congruences in $C$ with moduli dividing $\ell$.  If they form an almost-covering of the residues modulo $\ell$ we are done, so suppose they do not.  Then there are at least two residues modulo $\ell$ left uncovered by these congruences.  Pick one of these uncovered residues, say $a \pmod{\ell}$.  Now, create a new system of congruences $C'$, consisting of an almost-covering of the residues modulo $\ell$ which covers every residue except $a \pmod{\ell}$, and all of the same congruences as in $C$ for each modulus that does not divide $\ell$.  
    
We now check that every residue modulo $n$ covered by $C$ is also covered by $C'$. Fix a residue modulo $n$ covered by $C$. If it was covered by a congruence with modulus dividing $\ell$ in $C$, then it is still covered by one of the congruences with moduli dividing $\ell$ in $C'$ (though possibly by a different congruence).  On the other hand, if it was covered by a congruence with moduli not dividing $\ell$ in $C$, it is still covered by the same congruence class in $C'$.   Thus, $C'$ is a set of congruences satisfying the conclusion of the lemma.  
\end{proof}

\begin{proof}[Proof of Theorem \ref{thm:sunalmostcover}]
    We prove that $n$ with factorization \eqref{eq:factorn}  satisfying the hypothesis of the theorem is an almost-covering number by induction on $k$. If $k=1$, then $n=2^{\alpha_1}$ and a simple computation shows that $n$ is an almost-covering number.  Now suppose that it holds for $k-1$, and consider $n=\ell p_k^{\alpha_k}$ where $\ell=p_1^{\alpha_1} p_2^{\alpha_2}\cdots p_{k-1}^{\alpha_{k-1}}$ is an almost-covering number by the induction hypothesis, and $p_k = \tau(\ell)+1$ by the hypotheses of the theorem.    

    Now, applying Lemma \ref{lem:wlog-almost-cover}, since $\ell$ is an almost-covering number coprime to $p_k$ we can assume, without loss of generality, that a set of congruences covering a maximal number of residues modulo $n$ forms an almost-covering of the residues modulo $\ell$ when restricted to the congruences with moduli coprime to $p_k$.  Thus, these congruences coprime to $p_k$ cover all but one arithmetic progression modulo $\ell$ and, by shifting all of the congruence classes as necessary, we can assume this uncovered progression is $0 \pmod{\ell}$. It remains to cover as much of this arithmetic progression as possible using moduli divisible by $p_k$.  The number of residues modulo $n$ in this progression covered by any given divisor $d$ of $n$ is $\frac{n \gcd(\ell,d)}{\ell d} = \frac{p_k^{\alpha_k} \gcd(d,\ell)}{d} = p_k^{\alpha_k-i}$, where $i$ is the number of times that $p_k$ divides $d$.

    For each $1\leq i \leq \alpha_k$ there are exactly $\tau(\ell)=p_k-1$ divisors of $n$ divisible by $p_k^i$ but not by $p_k^{i+1}$ and so we find that 
    \begin{align*}
        r(n) &\leq \left(\frac{\ell-1}{\ell}\right)n + (p_k-1)\sum_{i=1}^{\alpha_k} p_k^{\alpha_k-i}  \\&= n-\frac{n}{\ell} + (p_k-1)\sum_{i=0}^{\alpha_k-1} p_k^i = n-p_k^{\alpha_k} + (p_k-1)\frac{p_k^{\alpha_k}-1}{p_k-1}=n-1.
    \end{align*}  On the other hand, by choosing, for each of the $\tau(\ell)=p_k-1$ divisors $d_1,d_2,\ldots d_{\tau(\ell)}$ of $n$ exactly divisible by $p_k^i$, the congruence class $\ell \cdot p_k^{i-1} \cdot j \pmod{d_j}$ for $j \in \{1,2,\ldots \tau(\ell)\}$, we can see, between these congruences and the congruences modulo divisors of $\ell$, that every residue modulo $n$ is covered except the residue $0 \pmod{n}$.  Thus we can conclude that $n$ is an almost-covering number. \qedhere

\end{proof}

\subsection{Complementary Bell numbers and generalizations.}
If the moduli $m_1, m_2, \ldots m_k$ are coprime, it follows from the Chinese remainder theorem that the density of those integers contained in any residue system with these moduli is \[1- \prod_{1\leq i \leq k} \left( 1- \frac 1{m_i}\right) = \sum_{1\leq i \leq k} \frac{1}{m_i} - \sum_{1\leq i < j \leq k} \frac{1}{m_im_j} + \cdots + \frac{(-1)^{k+1}}{m_1m_2\cdots m_k}.\]

The following generalization follows likewise from the Chinese remainder theorem by a similar inclusion-exclusion argument.
\begin{lemma}  \label{lem:gencrdbd} Suppose that $M=\{m_1, m_2, \ldots m_k\}$ is any set (or multiset) of moduli. Then the proportion of integers covered by any residue system whose moduli are the elements of $M$ is at most  
\[\sum_{m \in M} \frac 1m \ - \sum_{\substack{m,m' \in M\\ \gcd(m,m') = 1}} \frac{1}{mm'} \ + \ \sum_{\substack{m,m'\!,m'' \in M \\ \text{ pairwise coprime}}} \frac{1}{mm'm''} \ - \ \cdots \ = \sum_{\substack{S \subseteq M, \ S \neq \emptyset \\ S \text{ pairwise coprime}}} \frac {(-1)^{|S|+1}}{\lcm  S}. \]

\end{lemma}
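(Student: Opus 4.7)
The approach mirrors the standard inclusion-exclusion / Chinese remainder theorem argument used in the coprime case. For each $m \in M$ let $A_m = \{n \in \mathbb{Z} : n \equiv a_m \pmod{m}\}$ denote the residue class of the given system with modulus $m$, so we want to bound $d\!\left(\bigcup_{m \in M} A_m\right)$. The starting observation is that for any subset $S \subseteq M$, the intersection $\bigcap_{m \in S} A_m$ is either empty (when the congruences in $S$ are inconsistent) or a single coset of $\operatorname{lcm}(S)\mathbb{Z}$, giving the uniform bound $d\!\left(\bigcap_{m \in S} A_m\right) \leq 1/\operatorname{lcm}(S)$. When $S$ is pairwise coprime the Chinese remainder theorem automatically guarantees consistency, so the density is exactly $1/\operatorname{lcm}(S)$.

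Next I would apply the standard inclusion-exclusion identity
\[
d\!\left(\bigcup_{m \in M} A_m\right) \;=\; \sum_{\emptyset \neq S \subseteq M} (-1)^{|S|+1}\, d\!\left(\bigcap_{m \in S} A_m\right),
\]
and partition the sum according to whether $S$ is pairwise coprime. By the observation above, the pairwise coprime contributions sum to exactly the right-hand side claimed in the lemma. It therefore remains to show that the contribution from non-pairwise-coprime subsets is non-positive.

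The main obstacle is establishing this last inequality, which cannot be proved term-by-term since the signs alternate. My plan is to induct on $|M|$: pick some $m^* \in M$ lying in a non-coprime pair, and split the non-pairwise-coprime subsets by whether they contain $m^*$. Subsets not containing $m^*$ are handled by the inductive hypothesis on $M\setminus \{m^*\}$. For subsets $S$ containing $m^*$, I would exploit the nested containments $\bigcap_{m \in S} A_m \subseteq \bigcap_{m \in S'} A_m$ for $S' \subseteq S$ to group terms into telescoping pairs whose net contribution can be shown to be non-positive. The delicate step, which I expect to be the bulk of the work, is the careful bookkeeping to ensure each non-pairwise-coprime subset is accounted for exactly once and that the telescoping collapses to a clearly non-positive expression — analogous to how the even-order Bonferroni inequality produces a lower bound for a union's density by cutting off the inclusion-exclusion sum at the appropriate step.
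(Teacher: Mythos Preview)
The paper does not actually give a proof of this lemma: it merely asserts that the bound ``follows likewise from the Chinese remainder theorem by a similar inclusion--exclusion argument,'' without supplying any details. So there is no paper argument to compare against; the question is only whether your plan can be completed.

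Your reduction is correct and is exactly equivalent to the lemma: writing $f(S)=d\bigl(\bigcap_{m\in S}A_m\bigr)$, full inclusion--exclusion plus CRT on the pairwise-coprime subsets shows that the lemma amounts to
\[
\sum_{\substack{\emptyset\neq S\subseteq M\\ S\text{ not p.c.}}} (-1)^{|S|+1}f(S)\ \le\ 0.
\]
This cannot be proved pointwise: for $M=\{6,15,35,77,22\}$ with all residues $0$, the non-coprimality graph is a $5$-cycle, and at $n=0$ the corresponding pointwise sum equals $-1$ with the sign convention above (so the inequality fails at that single $n$ even though it holds in expectation). Any argument must genuinely use the averaging.

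Your proposed induction on $|M|$ has a concrete obstacle at the step you flagged as ``delicate.'' If $m^*$ is coprime to every other modulus, the induction does go through cleanly: the $S\ni m^*$ and $S\not\ni m^*$ contributions combine to $(1-1/m^*)$ times the sum for $M\setminus\{m^*\}$. But when $m^*$ shares a prime with some other modulus --- precisely the case you single out --- conditioning on $A_{m^*}$ replaces each modulus $m$ by $m/\gcd(m,m^*)$, which \emph{changes which pairs are coprime}. The inductive hypothesis speaks about non-pairwise-coprime subsets of $M\setminus\{m^*\}$ with the original coprimality structure, not this new one, so it does not apply directly. The nested-containment/``telescoping pairs'' idea you sketch does not obviously repair this, because subsets containing $m^*$ that are non-p.c.\ can be so either because $S\setminus\{m^*\}$ already has an edge or because $S\setminus\{m^*\}$ meets the neighbourhood of $m^*$; these two families overlap and neither admits a sign-preserving involution in any evident way. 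So as written the plan has a real gap at exactly the point you anticipated, and it is not clear the induction can be salvaged along these lines.
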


We now apply Lemma \ref{lem:gencrdbd} to the case where the moduli are the set $D_{>1}(n)\subset D(n)$ of divisors, exceeding 1, of some integer $n$.  This allows us to bound $c(n)$ as
\begin{align}
   c(n) = 1 + \frac{r(n)}{n} &\leq 1 + \sum_{\substack{S \subseteq D_{>1}(n), \ S \neq \emptyset, \\ S \text{ pairwise coprime}}} \frac {(-1)^{|S|+1}}{\lcm S} 
   \ = \ 1 + \sum_{d|n, d>1} \frac{1}{d} \!\! \sum_{\substack{S \subseteq D_{>1}(n) \\ \lcm S = d \\ S \text{ pairwise coprime}}} \!\!(-1)^{|S|+1} \notag \\
      &= 1 + \sum_{d|n, d>1} \frac{1}{d} \sum_{1\leq k \leq \omega(d)} (-1)^{k+1}S_2(\omega(d),k) \label{eq:stirlingbd}
\end{align} 
where $S_2(i,k)$ denotes the Stirling number of the second kind, which counts the number of ways to partition $i$ distinct objects (in our case the $i=\omega(d)$ distinct prime powers dividing $d$) into exactly $k$ nonempty subsets (the $k$ elements of $S$, each the products of the prime powers from each subset).  So, for any divisor $d|n$ we have \[S_2(\omega(d),k) =\sum_{\substack{S \subseteq D_{>1}(n) \\ \lcm S \ = \ d,\ |S|=k \\ S \text{ pairwise coprime}}} 1.\]   

Note that this innermost sum of \eqref{eq:stirlingbd} depends only on $\omega(d)$. Its values are the (negative of the) sequence of complementary Bell numbers (or Rao Uppuluri-Carpenter numbers).  In what follows, it will be useful to define a generalization of these numbers as follows.  We set
$$B(r,n) \coloneqq -\sum_{k=1}^n (-r)^k S_2(n, k).$$  So here, $B(1,n)$ is the negative of the sequence of complementary Bell numbers.

\begin{theorem} \label{thm:generalizedstirlingbd}
    If an integer $n$ factors as $n=\ell b$ where $\ell$ and $b$ are coprime and $\ell$ is an almost-covering number, then \begin{equation}
        c(n) \leq 1+\frac{\ell-1}{\ell} + \frac{1}{\ell} \sum_{d\mid b, d>1} \frac{B(\tau(\ell),\omega(d))}{d}. \label{eq:cstirlingupbd}
    \end{equation}
\end{theorem}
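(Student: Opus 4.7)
The plan is to refine the derivation of \eqref{eq:stirlingbd} by exploiting the almost-covering structure of $\ell$ to handle moduli dividing $\ell$ optimally, and then to bound the remaining contribution via Lemma \ref{lem:gencrdbd} applied to a suitably weighted multiset of effective moduli modulo $b$. By Lemma \ref{lem:wlog-almost-cover}, I may assume that some set of congruences realizing $r(n)$ contains, among those with moduli dividing $\ell$, an almost-covering of the residues modulo $\ell$. After translating all chosen residues by a fixed amount (which does not change $r(n)$), I may take the single uncovered class modulo $\ell$ to be $0 \pmod \ell$. Those congruences then cover exactly the $(\ell-1)b$ residues mod $n = \ell b$ whose reduction mod $\ell$ is nonzero, leaving only the target set $T$ of $b$ residues $\equiv 0 \pmod \ell$ still to be covered by the remaining moduli.

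Next, I would analyze moduli of the form $d = d_\ell d_b$ with $d_\ell \mid \ell$ and $d_b \mid b$, $d_b > 1$. By the Chinese remainder theorem, a congruence class $c \pmod d$ meets $T$ only when $c \equiv 0 \pmod{d_\ell}$, in which case, under the natural identification $T \leftrightarrow \mathbb{Z}/b\mathbb{Z}$, it corresponds to a single congruence class modulo $d_b$ covering $b/d_b$ elements of $T$. Since there are $\tau(\ell)$ choices of $d_\ell \mid \ell$ for each fixed $d_b$, the collection of moduli not dividing $\ell$ acts, from the perspective of covering $T$, as $\tau(\ell)$ independent congruences of modulus $d_b$ for each $d_b \mid b$ with $d_b > 1$.

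I would then apply Lemma \ref{lem:gencrdbd} to the multiset $M$ consisting of $\tau(\ell)$ copies of each such $d_b$. Since repeated values greater than $1$ cannot be pairwise coprime, any pairwise coprime sub-multiset $S \subseteq M$ effectively chooses one of $\tau(\ell)$ copies for each element of a pairwise coprime subset $S'$ of $\{d \mid b : d > 1\}$, contributing a weight of $\tau(\ell)^{|S'|}$. Grouping the alternating sum by $d = \lcm(S')$ and using that the number of ways to partition the $\omega(d)$ prime powers of $d$ into $k$ pairwise coprime groups equals $S_2(\omega(d),k)$, the fraction of $T$ that can be covered is bounded by
\[\sum_{\substack{d \mid b\\ d > 1}} \frac{1}{d}\sum_{k=1}^{\omega(d)}(-1)^{k+1}\tau(\ell)^k S_2(\omega(d),k) \ =\ \sum_{\substack{d \mid b\\ d > 1}} \frac{B(\tau(\ell),\omega(d))}{d}\]
by the definition of $B(r,n)$. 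Combining this with the $(\ell-1)b$ contribution from the $\ell$-moduli and dividing by $n = \ell b$ then produces \eqref{eq:cstirlingupbd}.

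The step I expect to be the main obstacle is the multiplicity bookkeeping at the end: applying the multiset version of Lemma \ref{lem:gencrdbd}, carrying the weight $\tau(\ell)^{|S'|}$ through the reorganization by $\lcm$, and matching the alternating signs with the definition $B(r,n) = -\sum_{k=1}^n (-r)^k S_2(n,k)$ requires care. Once the reduction to covering $T$ via $\tau(\ell)$ copies of each $d_b$-modulus has been made, however, the remaining work is essentially identifying this Stirling-weighted alternating sum with $B(\tau(\ell),\omega(d))$.
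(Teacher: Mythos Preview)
Your proposal is correct and follows essentially the same approach as the paper: both use Lemma~\ref{lem:wlog-almost-cover} to fix an almost-covering of $\ell$ leaving $0\pmod\ell$ uncovered, reduce the remaining moduli to a multiset of divisors of $b$ each with multiplicity $\tau(\ell)$, apply Lemma~\ref{lem:gencrdbd} to that multiset, and then perform the same Stirling-number bookkeeping to identify the resulting alternating sum with $B(\tau(\ell),\omega(d))$. The step you flagged as the main obstacle is precisely where the paper's proof does its careful computation, and your handling of it is correct.
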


\begin{proof}
    Let $C$ be a set of congruences realizing $c(n)$.  By Lemma \ref{lem:wlog-almost-cover} we can assume, without loss of generality, that the congruence classes in $C$ form an almost-covering of the residues modulo $\ell$, when restricted to the divisors of $\ell$.  Furthermore, by adding a constant to each congruence, we can assume that those congruences in $C$ with moduli dividing $\ell$ cover every residue except $0 \pmod{\ell}$, so it remains to consider the arithmetic progression $0 \pmod{\ell}$.  So, for every congruence $r \pmod{m}$ in $C$ with modulus $m$ that is not a divisor of $\ell$, we can assume that $r \equiv 0 \pmod{\!\!\gcd(m,\ell)}$.  
    
    Thus, the residues modulo $n$ in the progression $0 \pmod{\ell}$ that are covered by these congruences are the same as those covered by the set $C'$ in which we replace each modulus $m\nmid \ell$ in $C$, with a modulus $m'= m/\!\gcd(m,\ell)$, and leave the residue class unchanged.  Note that the moduli in $C'$ are no longer distinct---each modulus occurs $\tau(\ell)$ times.
      
    Furthermore, since the moduli of $C'$ are now coprime to $\ell$, they are now divisors of $b$ (greater than 1 since $m\nmid \ell$), and the number of residues modulo $n$ in the progression $0 \pmod{\ell}$ covered by $C'$ are the same as the number of residues modulo $b$ covered by $C'$.  We thus find that 
    \begin{equation} c(n) = 1 + \frac{r(n)}{n} = 1 + \frac{\ell -1}{\ell} +\frac{1}{\ell}\left( \frac{r_{\tau(\ell)}(b)}{b}\right) \label{eq:firstcbound}
    \end{equation}
where $r_k(b)$ denotes the maximum cardinality of the residues modulo $b$ that can be covered by a set of congruences with moduli greater than 1 dividing $b$ in which each modulus appears at most $k$ times.  (Note that $r(b) = r_1(b)$.)  We conclude by bounding the expression $ \frac{r_{\tau(\ell)}(b)}{b}$ with a calculation analogous to that of \eqref{eq:stirlingbd}.  Applying Lemma \ref{lem:gencrdbd} to the multiset $M'$ of moduli in $C'$, we find that 
\begin{align*}
\frac{r_{\tau(\ell)}(b)}{b} \ &\leq \  \sum_{\substack{S \subseteq M', \ S \neq \emptyset \\ S \text{ pairwise coprime}}}   \frac{(-1)^{|S|+1}}{\lcm{S}}.
\end{align*}
Note that while $M'$ is a multiset, the pairwise-coprime condition on the summands $S$ ensures that each $S$ is in fact a set. Furthermore, since each divisor of $b$ appears with multiplicity $\tau(\ell)$ in $M'$, any fixed subset $S \subseteq D(b)$ appears $\tau(\ell)^{|S|}$ times in the sum above, and so we rewrite the expression above as 
\begin{align*}
\frac{r_{\tau(\ell)}(b)}{b} \ &\leq \  \sum_{\substack{S \subseteq D(b), \ S \neq \emptyset \\ S \text{ pairwise coprime}}}   \tau(\ell)^{|S|}\frac{(-1)^{|S|+1}}{\lcm{S}} = \ - \!\sum_{\substack{S \subseteq D(b), \ S \neq \emptyset \\ S \text{ pairwise coprime}\\ \ }} \frac{(-\tau(\ell))^{|S|}}{\lcm{S}}\\
&=-\sum_{d|b, d>1}\frac{1}{d} \sum_{\substack{S \subseteq D(b) \\ \lcm S \ = \ d \\ S \text{ pairwise coprime}}} (-\tau(\ell))^{|S|} = -\sum_{d|b, d>1} \frac{1}{d} \sum_{1\leq k \leq \omega(d)} (-\tau(\ell))^{k}\sum_{\substack{S \subseteq D(b) \\ \lcm S \ = \ d,\ |S|=k \\ S \text{ pairwise coprime}}} 1\notag \\
      &= -\sum_{d|b, d>1} \frac{1}{d} \sum_{1\leq k \leq \omega(d)} (-\tau(\ell))^{k}S_2(\omega(d),k) 
      \ = \ \sum_{d|b, d>1} \frac{B(\tau(\ell),\omega(d))}{d}.
\end{align*}
     Inserting this bound into \eqref{eq:firstcbound} completes the proof. 
\end{proof}

\section{Upper bounds for the density of covering numbers}

We are now able to improve on the bound \eqref{eq:firstabundantupperbound} by combining the bounds of Lemma \ref{lem:cboundh} and Theorem \ref{thm:generalizedstirlingbd} with techniques developed to bound the density of of abundant numbers from above.    We first recall briefly the approach introduced by Behrend \cites{behrend32,behrend33} and refined by Del\'eglise to obtain the bounds $0.2474 <d(\mathcal{A}) < 0.2480.$  We only sketch the argument here; see \cite{deleglise} or \cite{kobayashiThesis} for details of the calculations.

\subsection{The approach of Behrend-Del\'eglise} \label{sec:behrenddeleglise}

Let $y,z >0$ be fixed parameters and let \[A_y(x)\coloneqq d(\{n : P^-(n) \geq y, h(n)>x\})\] denote the density of the set of $y$-rough integers whose abundancy index exceeds $x$.  (Note that $d(\mathcal{A}) = A_2(2)$.)  Behrend's approach starts with the observation that, since any abundant number $a$ can be partitioned as $a=nm$ with $P^+(n)<y$ and $P^-(m)\geq y$, it must be the case that $2<h(a)=h(n)h(m)$.  Thus $m$, the $y$-rough component of $a$, must satisfy the inequality $h(m)>\frac{2}{h(n)}$, and thus be counted in the density $A_y\left(\frac{2}{h(n)}\right)$.  We can thus decompose the density $d(\mathcal{A})$, for any value of $y$, as
\begin{align}
\!d(\mathcal{A}) = \!\sum_{\substack{n \\  P^+(n) < y}} \frac{1}{n} A_y\left(\frac{2}{h(n)}\right) &= \!\sum_{\substack{n \leq z\\ \ P^+(n) < y}} \frac{1}{n} A_y\left(\frac{2}{h(n)}\right) + \sum_{\substack{n > z\\ \ P^+(n) < y}} \frac{1}{n} A_y\left(\frac{2}{h(n)}\right) \notag \\
&\leq\! \sum_{\substack{n \leq z\\ \ P^+(n) < y}} \frac{1}{n} A_y\left(\frac{2}{h(n)}\right) + \left(\prod_{p< y}\left(1-\frac{1}{p}\right)\right)\sum_{\substack{n > z\\ \ P^+(n) < y}} \frac{1}{n} \notag \\
&= \!\sum_{\substack{n \leq z\\ \ P^+(n) < y}} \frac{1}{n} A_y\!\left(\frac{2}{h(n)}\right) + \raisebox{-0.6ex}{$\displaystyle\left(\raisebox{0.5ex}{$\displaystyle 1-\prod_{p< y}\left(1-\frac{1}{p}\right)\!\!\!\sum_{\substack{n \leq z\\ \ P^+(n) < y}} \frac{1}{n} $}\right)$}. \label{eq:Behrendbd}
\end{align}   
In the second line above, the value of $A_y(x)$ is bounded from above by the density of all $y$-rough numbers, and in the third line, the second term, which denotes the density of those integers whose largest $y$-smooth divisor exceeds $z$, was rewritten as one minus the density of the complement of that set.  The terms $A_y(x)$ appearing in the first sum are then bounded using moments of the function $h_y(n) \coloneqq \sum\limits_{\substack{d|n \\ P^-(d)\geq y}} \frac{1}{d} = \prod\limits_{\substack{p^\alpha || n\\ p\geq y}}\left( 1+\frac{1}{p} + \cdots + \frac{1}{p^\alpha}\right)$.  A calculation (using that $h_y(n)\geq 1$ for all integers $n$) shows, for any $r>0$, that 
\begin{equation} A_y(x) \leq \frac{\mu_{y,r} -1}{x^r-1} \prod_{p< y}\left(1-\frac 1p\right)\label{eq:Aymomentbd}\end{equation}
where \begin{equation}
    \mu_{y,r} \coloneqq \lim\limits_{x \to \infty} \frac 1x \sum\limits_{n\leq x} h_y(n)^r = \prod\limits_{p\geq y}\left(1\, {+}\sum\limits_{i=1}^\infty\frac{\left(1+\frac 1p{+} \cdots {+} \frac 1{p^{i}}\right)^r\!-\left(1+\frac 1p{+} \cdots {+}\frac 1{p^{i-1}}\right)^r}{p^i}\right) \label{eq:moment}
\end{equation} is the $r$-th moment of $h_y(n)$.

Del\'eglise then obtains his numerical upper bound by taking $y=500$ and $z=10^{14}$ in \eqref{eq:Behrendbd} and \eqref{eq:Aymomentbd}, using explicit upper bounds for the terms in the product expression \eqref{eq:moment}, and finally taking for each application of \eqref{eq:Aymomentbd} the minimum value obtained as $r$ ranges over the powers of 2 between 1 and 4096.

\subsection{Initial bounds for the density of covering numbers}

By using the observation of Theorem \ref{thm:cleqch} in place of the multiplicativity of the function $h(x)$, we can immediately adapt Behrend's approach to obtain upper bounds for the density of the covering numbers.  Following the argument used to derive  \eqref{eq:Behrendbd} we see that 
\begin{align}
d(\mathcal{C}) \leq\! \sum_{\substack{n \\  P^+(n) < y}} \frac{1}{n} A_y\left(\frac{2}{c(n)}\right) 
&=\! \sum_{\substack{n \leq z\\ \ P^+(n) < y}} \frac{1}{n} A_y\left(\frac{2}{c(n)}\right) + \raisebox{-0.6ex}{$\displaystyle\left(\raisebox{0.5ex}{$\displaystyle 1-\prod_{p< y}\left(1-\frac{1}{p}\right)\!\!\!\sum_{\substack{n \leq z\\ \ P^+(n) < y}} \frac{1}{n} $}\right)$}. \label{eq:firstcovdenbd}
\end{align}
When trying to use this expression to obtain numerical upper bounds following Del\'eglise, one quickly finds that this bound is far less useful in practice than \eqref{eq:Behrendbd} is for abundant numbers, due to the relative difficulty of computing $c(n)$ vs $h(n)$.  While $h(n)$ can be computed very quickly, even for very large numbers so long as their factorization is known, exact computation of $c(n)$ quickly becomes intractable even for relatively small values of $n$ (see Section \ref{sec:Lowbd} for further discussion of the computation of $c(n)$.)

Since we seek here upper bounds for $d(\mathcal{C})$, we can replace $c(n)$ with an an easier-to-compute upper bound $c'(n)$ based on Theorem \ref{thm:generalizedstirlingbd}.  To make effective use of this theorem, we would like to take a large almost-covering divisor of $n$, however determining rigorously the largest almost-covering divisor of $n$ is difficult in general.  Instead, we take, for a fixed integer $n$, the largest almost-prime divisor in the form of Theorem \ref{thm:sunalmostcover} and define the following.

\begin{definition} \label{def:cprime}
    To define the function $c'(n)$ we first set $\ell=\ell(n)$ to be the Sun-almost-covering divisor of $n$ as defined in Definition \ref{def:sacd}.  We then  write $b=\frac n\ell = p_{j+1}^{\alpha_{j+1}}\cdots p_k^{\alpha_k}$.  
 
 If $P^-(b)=p_{j+1}\leq \tau(\ell)$ then $n$ is a covering number by Corollary \ref{cor:covfromalmostcov} and we define $c'(n)=2$, otherwise we define \[c'(n) = 1+\frac{\ell-1}{\ell} + \frac{1}{\ell} \sum_{d\mid b, d>1} \frac{B(\tau(\ell),\omega(d))}{d}, \]
    which is the expression on the right-hand side of \eqref{eq:cstirlingupbd}.
\end{definition}
Note that by construction (and Theorem \ref{thm:generalizedstirlingbd}) we have $c(n)\leq c'(n)$.\footnote{Unlike $c(n)$, for which we have $c(n)\leq 2$ by definition, it is possible  that $c'(n)>2$, even for $n$ which are not covering numbers. However this won't have an effect on our arguments.}  We can thus replace each occurrence of $c(n)$ in \eqref{eq:firstcovdenbd} with $c'(n)$ and obtain a new upper bound for $d(\mathcal{C})$ which is substantially more useful in practice.  
\begin{align}
d(\mathcal{C}) \leq\! \sum_{\substack{n \\  P^+(n) < y}} \!\frac{1}{n} A_y\left(\frac{2}{c'(n)}\right) 
&=\!\! \sum_{\substack{n \leq z\\ \ P^+(n) < y}} \!\frac{1}{n} A_y\left(\frac{2}{c'(n)}\right) + \raisebox{-0.6ex}{$\displaystyle\left(\raisebox{0.5ex}{$\displaystyle 1-\prod_{p< y}\left(1-\frac{1}{p}\right)\!\!\!\sum_{\substack{n \leq z\\ \ P^+(n) < y}} \frac{1}{n} $}\right)$}.\label{eq:secondcovdenbd}
\end{align} Nevertheless, computation of $c'(n)$ is still much slower than $h(n)$, and so we now make an additional improvement which will correspond to an analogous improvement to the application of \eqref{eq:Behrendbd} when computing $d(\mathcal{A})$ as well.  We present the improved computation of $d(\mathcal{A})$ first, for simplicity, before noting a few modifications necessary for the computation of $d(\mathcal{C})$.

\subsection{A more flexible version of Del\'eglise's bound} \label{sec:flexBehrend} 

The approach taken by Del\'eglise is to sum, as in \eqref{eq:Behrendbd}, over all $n$ in a ``large box,''  namely  all integers $n$ satisfying both $n \leq z$ and $P^+(n)< y$, with increasingly more precise numerical bounds obtained by taking larger values of $z$ and $y$ (at the cost of longer calculations).  When implementing this approach, to bound $d(\mathcal{C})$ using \eqref{eq:secondcovdenbd} with even moderate-sized values of $z$ and $y$, one quickly observes that the algorithm spends a substantial amount of time dealing with values of $n$ which are relatively inconsequential to the sum (large values of $n$ with $c'(n)$ relatively small) as well as integers $n$ which cannot correspond to any covering numbers (such as $n$ coprime to 6, which we know cannot be covering numbers after the work of Hough and Nielsen \cite{Hough19}).  The more substantial contribution to the sum comes from values of $n$ with $c'(n)$ close to 2, which typically correspond to $n$ divisible by high powers of 2.  This observation leads to the following, more general version of \eqref{eq:Behrendbd}.  We first explain how the idea works for abundant numbers, and then discuss how to treat covering numbers by a similar approach.

\begin{definition}
    For an ordered pair of integers $(a,q)$ satisfying $P^+(a) \leq q$, we denote by \[M_{a,q} \coloneqq \{av: P^-(v) \geq q\}\] the set of $q$-rough multiples of $a$.  Now, suppose that $W$ is a set of such pairs $(a,q)$ 
    with the property that for any two pairs $(a,q),(a',q') \in W$, the sets $M_{a,q}$ and $M_{a',q'}$ are disjoint.
    If, in addition, the sets $M_{a,q}$ partition the positive integers, \begin{equation} \mathbb{N} = \bigsqcup_{(a,q) \in W} 
\label{eq:disjunion} M_{a,q},\end{equation} then we say that the pairs $W$ form a \textit{smooth-rough-divisor-partition} of the integers.  For such a set $W$, it will sometimes be convenient to consider separately the two subsets $W_<\coloneq \{(a,q) \in W : P^+(a)<q\}$ and $W_= \coloneqq \{(a,q) \in W : P^+(a)=q\}$, so $W=W_< \sqcup W_=$.
\end{definition}
For example, take $W=\{(1,7),(2,5),(3,5),(4,3),(5,5),(6,5),(8,2),(9,3),(18,3)\}$.    In this case we have \begin{align*}
    W_< &= \{(1,7),(2,5),(3,5),(4,3),(6,5)\}, \\ W_= &= \{(5,5),(8,2),(9,3),(18,3)\}
\end{align*} and it is straightforward to check that $W$ forms a smooth-rough-divisor-partition. 

Note that if $W$ is finite, then $W_=$ is necessarily nonempty; however it is possible for $W_=$ to be empty if $W_<$ is infinite.\footnote{For example $W=W_<=\{(2^i,3),i\geq 0\}$.}

\medskip 

Now, suppose that $W$ is any finite smooth-rough-divisor-partition of the integers.  Then
\begin{align}
d(\mathcal{A}) & = \sum_{(a,q) \in W } d\big(\{m \in M_{a,q}:h(m)\geq 2\}\big) \notag \\  
&= \sum_{(a,q) \in W_< } d\big(\{m \in M_{a,q}:h(m)\geq 2\}\big) + \sum_{(a,q) \in W_= } d\big(\{m \in M_{a,q}:h(m)\geq 2\}\big) \notag \\
&=\sum_{(a,q) \in W_< }\frac{1}{a} A_q\left(\frac{2}{h(a)}\right) + \sum_{(a,q) \in W_= } \frac{1}{a} d\big(\{m': P^-(m')\geq q,\ h(m'a)\geq 2\}\big). \label{eq:interimprovedAbound}
\end{align}
In the second sum the largest prime divisor of $a$ is $q$ 
(since $(a,q) \in W_=$) so we can't necessarily split $h(m'a)$ using the multiplicativity of $h$, as $q$ may divide $m'$ as well as $a$. 

To handle $W_=$ we break into two cases based on whether $h(a)\geq 2$.  First we treat those $(a,q) \in W_=$ with $h(a)\geq 2$.  In this case every element of $M_{a,q}$ is abundant, so we treat these elements exactly as \[\sum_{\substack{(a,q) \in W_= \\h(a)\geq 2 }} \frac{1}{a} d\big(\{m': P^-(m')\geq q,\ h(m'a)\geq 2\}\big) = \sum_{\substack{(a,q) \in W_= \\h(a)\geq 2 }} \frac{1}{a} \prod_{p<q}\left(1-\frac 1p\right).\]  Note that the sum above also serves as a lower bound for $d(\mathcal{A})$.  We now treat the remaining elements of $W_=$. Let $a_q$ and $m''$ denote the largest divisors of $a$ and $m'$ respectively that are not divisible by $q$. Partitioning the integers $m'=q^im''$ according to the power of the prime $q$ dividing $m'$, we can bound the density appearing in the second sum by 

\begin{align}
 \frac{1}{a} d\big(\{m'&: P^-(m') \geq q,\ h(m'a)\geq 2\}\big) \label{eq:W_2densityterm} \\& = \frac 1a \sum_{i=0}^\infty  d\left(\{q^i m'': P^-(m'') > q,\ h(q^im'' a) \geq 2\}\right)\notag \\
 &= \frac 1a\sum_{i=0}^\infty \frac{1}{q^i} d\left(\left\{m'': P^-(m'') > q,\ h(m'')\geq \frac{2}{h(q^i a)}\right\}\right) \notag \\
 &\leq \frac 1{a(1-1/q)} \cdot  A_{q+1}\left( \frac{2(1-1/q)}{h(a_q)}\right). \notag
\end{align}   

In the final step above, since $a$ is not coprime to $q$ (but $a_q$ is), we bounded
\[h(q^ia) \leq h(a_q)\left(1+\frac 1q + \frac 1{q^2} + \cdots \right) = \frac{h(a_q)}{(1-1/q)}.\]  Inserting this in \eqref{eq:interimprovedAbound} gives the upper bound \begin{align}
    d(\mathcal{A})  &\leq  \sum_{(a,q) \in W_< }\frac{1}{a} A_q\left(\frac{2}{h(a)}\right) + \sum_{\substack{(a,q) \in W_= \\h(a)\geq 2 }} \frac{1}{a} \prod_{p<q}\left(1-\frac 1p\right) + \sum_{\substack{(a,q) \in W_= \\h(a) < 2 }} \frac {A_{q+1}\left( \frac{2(1-1/q)}{h(a_q)}\right)}{a\,(1-1/q)} \label{eq:afinalupbd} 
\end{align}
while recalling the observation about the lower bound for $d(\mathcal{A})$ gives
\begin{equation}
    \sum_{\substack{(a,q) \in W_= \\h(a)\geq 2 }} \frac{1}{a} \prod_{p<q}\left(1-\frac 1p\right) \ \leq \ d(\mathcal{A}). \label{eq:afinallowbd} 
\end{equation}
\subsubsection{Bounding covering numbers}
A similar bound holds for $d(\mathcal{C})$.  In this case we can derive an equation identical to \eqref{eq:afinalupbd} with $c$ in place of $h$ in each of the sums by using Theorem \ref{thm:cleqch} in place of the multiplicativity of $h$ in the first sum.  In the first two sums we then bound $c$ by $c'$. In the third sum, however, we work a bit harder to bound the expression $c(m'a)$ appearing in the expression analogous to \eqref{eq:W_2densityterm}.  Writing $m'a=m''a_qq^j$ as above, we have $c(m'a) \leq h(m'')c(a_{q}q^j)$.  We then bound $c(a_{q}q^j)$ by a function $ \overline{c}(a_q,q)$ defined as follows.  

First, if $a_q$ is a Sun-almost-covering number and $\tau(a_q) \geq q-1$ (note that in this case the number $a_qq$ is already either a covering number or a Sun-almost-covering number) then we use the trivial bound $c(a_qq^j) \leq \overline{c}(a_q,q) \coloneqq 2$.  Otherwise we take $\ell$ to be the Sun-almost-covering divisor of $a_q$, defined as in Definition \ref{def:cprime} (which may still be $a_q$ itself
), and set $b=\frac{a_q}{\ell}$ and then consider 
\begin{align}
    c(a_{q}q^j) \leq c'(a_{q}q^j) &= 1+\frac{\ell-1}{\ell} + \frac{1}{\ell} \sum_{d\mid bq^j, d>1} \frac{B(\tau(\ell),\omega(d))}{d} \notag \\
    &= 1+\frac{\ell-1}{\ell} + \frac{1}{\ell} \sum_{d\mid b,\ d>1} \frac{B(\tau(\ell),\omega(d))}{d} + \frac{1}{\ell}\sum_{d\mid bq^j,\ q|d} \frac{B(\tau(\ell),\omega(d))}{d} \notag \\
    &= c'(a_q) + \frac{1}{\ell} \sum_{\substack{d\mid b} }\sum_{i=1}^j\frac{B(\tau(\ell),\omega(dq^i))}{dq^i} \notag \\
    &= c'(a_q) + \frac{1}{\ell} \left(\sum_{i=1}^jq^{-i}\!\right)\sum_{\substack{d\mid b} }\frac{B(\tau(\ell),\omega(d)+1)}{d} \notag \\ 
    &< c'(a_q) + \frac{1}{\ell} \left(\frac{1}{q-1}\right)\sum_{\substack{d\mid b} }\frac{B(\tau(\ell),\omega(d)+1)}{d} \ \eqqcolon \ \overline{c}(a_q,q). \label{eq:overlinec}
\end{align}
Now, having defined $\overline{c}(a_q,q)$ in both of the cases when $a_q$ is a Sun-almost-covering number and $\tau(a_q) \geq q-1$ (here $\overline{c}(a_q,q)=2$) and otherwise by \eqref{eq:overlinec}, we can bound $d(\mathcal{C})$ analogously to \eqref{eq:afinalupbd} as
\begin{equation}
    d(\mathcal{C}) \leq  \sum_{(a,q) \in W_< }\frac{1}{a} A_q\left(\frac{2}{c'(a)}\right) + \sum_{\substack{(a,q) \in W_= \\c'(a)\geq 2 }} \frac{1}{a} \prod_{p<q}\left(1-\frac 1p\right) + \sum_{\substack{(a,q) \in W_= \\c'(a) < 2 }} \frac {A_{q+1}\left( \frac{2}{\overline{c}(a_q,q)}\right)}{a(1-1/q)}  . \label{eq:cfinalupbd}
\end{equation}

\section{Upper bound computations} \label{sec:upbdcomp}

The bounds described in Section \ref{sec:flexBehrend} apply for any set $W$ satisfying \eqref{eq:disjunion}.  In fact, the approach of Del\'eglise is essentially to take, for fixed $q$,  the set $W$ to be the infinite set $W=\{(a,q):P^+(a)<q\}$ (note in this case the subset $W_=$ is empty). He then applies a different argument to handle the large values of $a$.  

Thus, to bound these densities, it suffices to describe a set $W$ and then calculate the corresponding expressions in \eqref{eq:afinalupbd} and \eqref{eq:cfinalupbd}, so the goal now is simply to pick sets $W$ that result in as low of a bound as possible in a minimal amount of time.  After some experimentation, we found that a computationally efficient way to select the sets $W$ so that all of the summands appearing in the $W_<$ sums of \eqref{eq:afinalupbd} and \eqref{eq:cfinalupbd} were of either roughly uniform size or correspond to values of a with $h(a)\geq 2$ (respectively $c'(a)\geq 2$) that were encountered along the way.  In the case of $\mathcal{A}$, these $a$ values with $h(a)\geq 2$ can also be used to compute a corresponding lower bound for $d(\mathcal{A})$; however, since $c'(a)$ is only an upper bound for $c(a)$, we don't similarly obtain a lower bound for $d(\mathcal{C})$ in the same manner. 

In these calculations we need an upper bound for the quantities $A_y(x)$, which we numerically compute following the methods of Del\'eglise as outlined in Section \ref{sec:behrenddeleglise}.  Unlike Del\'eglise, we will need to compute these values for a wide range of values of $y$.  We largely follow his methods, except that we use a much larger values of $y$ (and a wider range of moments) which requires strengthening some of his inequalities to handle larger primes.  The details are described in Appendix \ref{app:deleglise}.  In what follows here we denote by $\widetilde{A}_y(x) \geq A_y(x)$ the numerical upper bound obtained by these calculations for a fixed $x$ and $y$.

Our algorithm then proceeds as follows.  We first fix parameters $Q$ and $Z$ with $Q$ a prime. We will consider values of $q<Q$, and the parameter $Z$ will correspond roughly with the size of the summands we consider in the $W_<$ sums.  

When computing the density of abundant numbers, we define \[f(n,k) \coloneqq \frac{\widetilde{A}_{{q}_{{}_{k}}}\!\left(\scalebox{0.95}{$\tfrac{2}{h(n)}$}\right)}{nk}\] where $q_k$ denotes the $k$-th prime number. After some numerical experimentation, we found that this function was a good measure of how deep to traverse the tree of smooth numbers to obtain good numerical values. We start by taking the set of integers \[S= \left\{n:h(n)<2, q_k= P^+\hspace{-0.3mm}(n)<Q, f(n,k+1)>Z\right\} .\]

We then take \begin{align*} W_1 \, &=\,  \{(nq_k,q_k):n\in S,  P^+(n)\leq q_k < Q, \text{ and } h(nq_k)\geq 2\}\\[1pt]
W_2 \, &=\, \big\{(n,q_{k}):n\in S,\ q_k=\min\!\big(\{Q\} \cup \{q_k:q_k>P^+(n),\ h(nq_k)<2, \text{ and }f(n,k)\leq Z\}\big)\big\}\\[1pt]
W_3 \, &=\, \left\{(nq_k,q_{k}):(n,q_j) \in W_2,\ P^+(n) \leq q_k < q_j,\text{ and }f(nq_k, k+1) \leq Z\right\} \\[2pt]
W_= &=\, W_1 \cup W_3, \ \  W_< = W_2.
\end{align*}

We now check that, in this case, $W=W_< \cup W_=$ forms a smooth-rough-divisor-partition by showing that every integer $n$ is contained in $M_{a,q}$ for precisely one $(a,q)\in W$.  Fix an integer $n$ and factor $n$ as $n=p_1p_2\cdots p_{\Omega(n)}$ with $p_1\leq p_2 \leq \cdots \leq p_{\Omega(n)}$.  
Let $d$ be the divisor of $n$ obtained by multiplying together the smallest prime factors of $n$ until the last time we obtain a divisor of $n$ that is contained in $S$, \[ d=\max_{0\leq j \leq \Omega(n)}\left\{d':d' \prod_{i=1}^j p_i, d' \in S \right\}. \]
Now let $r=P^-(n/d)\geq P^+(d)$ be the smallest prime factor of $n$ omitted from $d$.  We proceed with several cases.  

First, if $r=\infty$ (i.e., $n=d$) or $r\geq Q$, then $n \in M_{d,q}$ for the unique $(d,q)\in W_2$ whose first element is $d$.  It is also clear that such an $n$ is not contained in $M_{a,q}$ for any $(a,q)$ in either $W_1$ or $W_3$. 

Otherwise, if $h(dr)\geq 2$, then $(dr,r) \in W_1$ and $n \in M_{dr,r}$, while the construction of $W_2$ and $W_3$ precludes the possibility that $n \in M_{a,q}$ for any $(a,q)$ in $W_2$ or $W_3$.

This leaves the case that $h(dr)<2$. Suppose $r=q_k$ is the $k$th prime number. The fact that $dr=dq_k \notin S$ means that it must be the case that $f(dq_k,k+1)\leq Z$.  Suppose first that $1< r=q_k = P^+(d)$. (So we have $r^2\mid n$.) By the construction of $W_2$, we have $(d,q) \in W_2$ for some $q>r$, and so we find that $(dr,r)\in W_3$ and $n\in M_{dr,r}$ while $n \not\in M_{a,q}$ for any other $(a,q)\in W$.

Finally, that brings us to the case that $P^+(d)<r<Q$.    Since $d \in S$, there is a unique $(d,q) \in W_2$ whose first element is $d$.  If $r\geq q$ then $n \in M_{d,q}$, otherwise if $r<q$ then, as before, since $dr \not\in S$, and $h(dr)< 2$, we have $f(dr,r)\leq Z$, so by the construction of $W_3$ we have $(dr,r) \in W_3$. 

\medskip

By taking the parameters $Z=2^{-78}$ and $Q=q_{1000000}=15\,485\,863$ we create a smooth-rough-divisor-partition $W$, which we use in \eqref{eq:afinallowbd} and \eqref{eq:afinalupbd} to give both the lower and upper bounds quoted in Theorem \ref{thm:abundantbds} after about 20000 hours of computation.\footnote{While computing this level of precision required a large distributed computation, the method is able to improve on the existing bounds very quickly.  Taking $Z=2^{-46}$ and $Q=q_{10000}=104729$ we recover the previously known lower bound \eqref{eq:kobayashibds} and improve the upper bound to $d(\mathcal{A})<0.247628$ in under 10 seconds, if tables of moments for $\widetilde{A}_y(x)$ have been precomputed. Taking $Z=2^{-64}$, and $Q= q_{100000} = 1299709$ gives us 6 digits of $d(\mathcal{A})$, namely $0.24761951	<d(\mathcal{A}) < 0.24761998$ in just 11 CPU hours of computation time.}  

We use a nearly identical construction to obtain a smooth-rough-divisor-partition $W$ to compute an upper bound for $d(\mathcal{C})$, replacing only each occurrence of $h(n)$ and $f(n,k)$ in the definitions of $S, W_1, W_2$, and $W_3$ by $c'(n)$ and $f'(n,k)  \coloneqq \frac{1}{nk}\widetilde{A}_{{p}_{{}_{k}}}\!\left(\scalebox{0.95}{$\tfrac{2}{c'(n)}$}\right)$ respectively.  Then taking $Z=2^{-60}$ and $Q=q_{50000}=224737$ in  \eqref{eq:afinalupbd} gives the upper bound of Theorem \ref{thm:covdensity}.

\section{Lower Bounds for \texorpdfstring{$d(\mathcal{C})$}{d(C)}} \label{sec:Lowbd}

While the method described above can be used to obtain lower bounds on the density of abundant numbers using \eqref{eq:afinallowbd}, the analogous method for covering numbers does not give a lower bound for $d(\mathcal{C})$.  The analogous sum (the middle of the three sums in \eqref{eq:cfinalupbd}) gives a lower bound for the density of those integers $n$ with $c'(n)\geq 2$, of which $\mathcal{C}$ is a subset.\footnote{The same parameters used to compute the upper bound for $d(\mathcal{C})$ give the lower bound 0.103369 for the density of numbers with $c'(n)\geq 2$. This may already exceed the true value of $d(\mathcal{C})$, so another interpretation of this value is as a lower bound for how small the upper bound for $d(\mathcal{C})$ can be made using this method.}  It may be the case that this density is the same as that of $d(\mathcal{C})$, though this seems unlikely.

Instead, since the sequence of primitive covering numbers seems to be relatively sparse (at least at first), we compute the lower bound by computing exactly the density of multiples of known small primitive covering numbers.  

Determining whether a given integer $n$ is a covering number seems to be computationally intractable in general.  It seems likely this problem may be NP-complete, since, as noted in \cite[Problem AN2]{Garey} and proven in \cite{Stockmeyer}, even the ``simultaneous incongruences'' problem of determining whether a fixed set $\{(a_1,m_1),\ldots (a_k,m_k)\}$ of moduli $m_i$ and residues $a_i \pmod{m_i}$, forms a covering system is already NP-complete.  The same remark applies for the problem of computing the function $c(n)$.

We determine the complete list of small primitive covering numbers by the following method.  We treat integers $n$ one at a time in increasing order using the following steps.
\begin{enumerate}[itemsep=4pt, parsep=0pt]
    \item We check whether $n$ is divisible by any of the known, smaller primitive covering numbers.  If so, $n$ is not primitive covering and we proceed to the next $n$.  
    \item If $h(n)\leq 2$ then $n$ is not a covering number and we proceed to the next $n$. 
    \item We determine 
the Sun-almost-covering divisor $\ell(n)$ which we use to compute $c'(n)$.  Likewise, if $c'(n)<2$ then $n$ is not a covering number and we proceed to the next $n$.
\item If $\frac{n}{\ell(n)}$ is prime, we check whether $n$ can be verified to be a covering number using Corollary \ref{cor:covfromalmostcov}.  If so, we add it to our list of primitive covering numbers and proceed to the next $n$. (We know $n$ is a primitive covering number from the check in step 1.)
\item Finally, if the previous steps have not resolved whether $n$ is a covering number, we construct an integer (binary) program of boolean variables with constraints that are satisfiable if and only if $n$ is a covering number. We then ask the \cite{gurobi} MIP solver to solve our model.  The solver returns either a solution, corresponding to a covering system (and hence $n$ is a primitive covering number), or it returns ``infeasible''\!, in which case $n$ is not a primitive covering number.       
\end{enumerate}

We discuss further the logistics of step (5) in Appendix \ref{app:ip}. Step (5) is computationally intensive, requiring in some cases several hours for a single value of $n$.   Fortunately, we can resolve most small values of $n$ using steps (1)--(4). The smallest $n$ which requires step (5) is $n=7700$ (which is not a covering number).

In this way we are able to identify all of the primitive covering numbers less than $n=773500 = 2^2 {\times} 5^3 {\times} 7 {\times} 13 {\times} 17$, the smallest number for which our MIP solver has not (yet) been able to give an answer.  With the exception of 773500, we determine all primitive covering numbers less than 1000000, which are listed in Table \ref{tab:primcovnums}.  By computing exactly the density of the multiples of the numbers in this table (except 773500), we obtain the lower bound 
$0.10323<d(\mathcal{C})$.

\section*{Acknowledgements}
The authors thank Carl Pomerance, Ofir Gorodetsky, Jonah Klein and Mits Kobayashi for helpful comments, the anonymous referees for carefully reading both the paper and the associated code and providing useful suggestions, and Mike O'Leary for access to computational resources.

\appendix
\section{Stronger versions of Del\'eglise's moment bounds} \label{app:deleglise}

As mentioned in Section \ref{sec:upbdcomp}, we numerically bound the quantities $A_y(x)$ by computed values $\widetilde{A}_y(x) \geq A_y(x)$ which are obtained by calculations which closely follow those used by Del\'eglise.  We extend his calculations in order to allow for higher moments $r$ and larger values of $y$.  In particular, starting with \eqref{eq:moment}, we bound the moment \[\mu_{y,r} = \lim\limits_{x \to \infty} \frac 1x \sum\limits_{n\leq x} h_y(n)^r = \prod\limits_{p\geq y}\left(1\, {+}\sum\limits_{i=1}^\infty\frac{\rho_{p,r,i}}{p^i}\right) \] where we have written \[\rho_{p,r,i} \coloneqq \left(1+\frac 1p{+} \cdots {+} \frac 1{p^{i}}\right)^r\!-\left(1+\frac 1p{+} \cdots {+}\frac 1{p^{i-1}}\right)^r.\]

The following are Lemmas 6.3 and 6.4 respectively of \cite{deleglise}.

\begin{lemma}[Del\'eglise] \label{lem:dlem6.3}
    For every integer $r$ and every prime $p$,
    \[\sum_{i > 0} \frac{\rho_{p,r,i}}{p^i} \leq  \frac{(1+1/p)^r-1}{p}+r\left(\frac{1}{1-1/p}\right)^{r-1}\frac{1}{p^4-p^2}.\]
\end{lemma}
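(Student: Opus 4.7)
The plan is to split off the $i = 1$ term of the sum (which should account for the first term on the right-hand side directly) and to bound the tail $i \geq 2$ via the crudest pointwise estimate on $\rho_{p,r,i}$ coming from the mean value theorem. Writing $T_i := 1 + 1/p + \cdots + 1/p^i$, so that $T_0 = 1$ and $T_1 = 1 + 1/p$, the $i = 1$ contribution evaluates exactly to
\[\frac{\rho_{p,r,1}}{p} \ =\ \frac{T_1^r - 1}{p} \ =\ \frac{(1+1/p)^r - 1}{p}.\]
(The statement as transcribed has $(1-1/p)^r - 1$, which is negative for $r \geq 1$ and cannot bound the manifestly positive left-hand side; I read this as a typographical issue that the derivation pins down unambiguously as $(1+1/p)^r - 1$.)

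For the tail, I would apply the mean value theorem to $y \mapsto y^r$ on $[T_{i-1}, T_i]$. Setting $T := (1-1/p)^{-1} = \lim_j T_j$, every $\xi \in [T_{i-1}, T_i] \subset (0, T]$ satisfies $\xi^{r-1} \leq T^{r-1}$ for $r \geq 1$, so combined with $T_i - T_{i-1} = p^{-i}$ we get
\[\rho_{p,r,i} \ =\ T_i^r - T_{i-1}^r \ \leq\ rT^{r-1}(T_i - T_{i-1}) \ =\ \frac{rT^{r-1}}{p^i}.\]
Dividing by $p^i$ and summing over $i \geq 2$ then reduces to a single geometric series:
\[\sum_{i \geq 2}\frac{\rho_{p,r,i}}{p^i} \ \leq\ rT^{r-1}\sum_{i \geq 2}\frac{1}{p^{2i}} \ =\ rT^{r-1}\cdot\frac{p^{-4}}{1 - p^{-2}} \ =\ \frac{rT^{r-1}}{p^4 - p^2},\]
which matches the second summand on the right-hand side in the form $r(1-1/p)^{-(r-1)}/(p^4 - p^2)$.

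Adding the two contributions completes the argument, and I do not anticipate any real obstacle: the proof uses only the mean value theorem, the identity $T_i - T_{i-1} = p^{-i}$, and one geometric sum. The only subtlety worth flagging is the aforementioned sign issue in the first summand of the stated right-hand side; the derivation naturally produces $(1 + 1/p)^r - 1$ from $T_1^r - T_0^r$, and the resulting bound is in fact tight at $r = 1$ (where both sides collapse to $\sum_{i\geq 1} p^{-2i} = 1/(p^2 - 1)$), which provides a clean consistency check.
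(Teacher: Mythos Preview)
The paper does not supply its own proof of this lemma; it is quoted as Lemma~6.3 of Del\'eglise and used as a black box. Your argument is correct and is the natural one: isolate the $i=1$ term exactly, bound $\rho_{p,r,i}$ for $i\ge 2$ via the mean value theorem by $rT^{r-1}p^{-i}$ with $T=(1-1/p)^{-1}$, and sum the resulting geometric series $\sum_{i\ge 2}p^{-2i}=1/(p^4-p^2)$.

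Your observation about the sign in the first summand is also correct: the first term should read $\bigl((1+1/p)^r-1\bigr)/p$ rather than $\bigl((1-1/p)^r-1\bigr)/p$, since the former is exactly $(T_1^r-T_0^r)/p$. Your $r=1$ sanity check confirms this, and indeed the inequality is false as printed (at $p=2$, $r=1$ the left side is $1/3$ while the printed right side is $-1/6$).
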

\begin{lemma}[Del\'eglise] \label{lem:dlem6.4}
For every integer $r$ and every prime $p > 
\max(2r, 15)$ we have
\[\sum_{i>0} \frac{\rho_{p,r,i}}{p^i} \leq 1.31\frac{r}{p^2}.\]
\end{lemma}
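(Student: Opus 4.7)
The plan is to split the sum into the dominant $i=1$ term and the tail $\sum_{i\geq 2}$, bounding each against $r/p^2$ with a total constant comfortably below $1.31$.

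For the first term I would use the inequality $(1+x)^r \leq e^{rx}$ to obtain $\rho_{p,r,1}/p \leq (e^{r/p}-1)/p = (r/p^2)\cdot(e^{r/p}-1)/(r/p)$. The key observation is that $\phi(x)\coloneqq (e^x-1)/x$ is increasing on $(0,\infty)$: its derivative equals $(e^x(x-1)+1)/x^2$, whose numerator vanishes at $0$ and has derivative $xe^x > 0$, so the numerator is positive on $(0,\infty)$. Since the hypothesis $p > 2r$ gives $r/p < 1/2$, we conclude $\phi(r/p) \leq \phi(1/2) = 2(\sqrt{e}-1) < 1.298$, and hence $\rho_{p,r,1}/p < 1.298\cdot r/p^2$.

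For the tail, writing $S_j = 1 + 1/p + \cdots + 1/p^j$ and applying the mean value theorem to $t\mapsto t^r$ on $[S_{i-1},S_i]$ gives $\rho_{p,r,i} \leq r S_i^{r-1}(S_i - S_{i-1}) \leq r\bigl(\tfrac{p}{p-1}\bigr)^{r-1}/p^i$, using $S_i - S_{i-1} = 1/p^i$ and $S_i \leq p/(p-1)$. Summing the resulting geometric tail $\sum_{i\geq 2}p^{-2i} = 1/(p^2(p^2-1))$ yields $\sum_{i\geq 2}\rho_{p,r,i}/p^i \leq (r/p^2)\cdot\bigl(\tfrac{p}{p-1}\bigr)^{r-1}/(p^2-1)$. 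Bounding $\bigl(\tfrac{p}{p-1}\bigr)^{r-1} \leq e^{(r-1)/(p-1)}$ and noting that $p > 2r$ forces $(r-1)/(p-1) < 1/2$ (so this quantity is less than $e^{1/2} < 1.65$), together with $p^2 - 1 \geq 288$ (from $p > 15$, i.e.\ $p\geq 17$), the tail is at most $(r/p^2)\cdot e^{1/2}/288 < 0.006\cdot r/p^2$.

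Combining the two pieces gives $\sum_{i>0}\rho_{p,r,i}/p^i < (1.298 + 0.006)\,r/p^2 < 1.31\,r/p^2$, as required. The main subtlety I foresee is producing the sharp constant $2(\sqrt{e}-1) \approx 1.297$ in the first-term bound: a cruder estimate such as replacing $e^{r/p}-1$ by $(r/p)e^{1/2}$ directly only yields the constant $\sqrt{e} \approx 1.65$, which is insufficient. Exploiting the monotonicity of $(e^x-1)/x$ to evaluate at the worst case $x=1/2$ is what buys the needed factor and keeps the final constant below $1.31$.
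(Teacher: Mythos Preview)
Your proof is correct. Note that the paper does not actually prove this lemma itself; it is quoted verbatim as Lemma~6.4 of Del\'eglise, with the proof deferred to that reference. Your argument is in fact a clean reconstruction of Del\'eglise's route: you first re-derive the content of the preceding Lemma~\ref{lem:dlem6.3} (the $i=1$ term is exactly $((1+1/p)^r-1)/p$, and your mean-value-theorem tail bound is precisely the term $r\bigl(\tfrac{p}{p-1}\bigr)^{r-1}/(p^4-p^2)$ appearing there), and then estimate each piece numerically under the hypotheses $p>2r$ and $p\geq 17$. The one genuinely non-obvious step---controlling $(e^{r/p}-1)/(r/p)$ via the monotonicity of $(e^x-1)/x$ to get the constant $2(\sqrt e-1)\approx 1.297$ rather than the cruder $\sqrt e\approx 1.649$---is handled correctly and is exactly what is needed to land below $1.31$.
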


\begin{lemma}
For every integer $r$ with $1\leq r < 5\times 10^7$
we have 
\[\prod_{p>10^8}\left(1+  \sum_{i>0} \frac{\rho_{p,r,i}}{p^i}\right) \leq \exp\left(\tfrac{7r}{10^{10}}\right).\]
\end{lemma}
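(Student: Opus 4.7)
Since $r < 5\times 10^7$ forces $2r < 10^8$, every prime $p > 10^8$ satisfies the hypothesis $p > \max(2r,15)$ of Lemma~\ref{lem:dlem6.4}, which then yields $\sum_{i>0} \rho_{p,r,i}/p^i \le 1.31\, r/p^2$. Applying the elementary inequality $1+x \le e^x$ factor-wise converts the product into an exponential:
\[
\prod_{p > 10^8}\!\left(1 + \sum_{i>0} \frac{\rho_{p,r,i}}{p^i}\right) \ \le \ \exp\!\left(1.31\, r\!\sum_{p > 10^8}\!\frac{1}{p^2}\right).
\]
The full claim therefore reduces to the prime-sum estimate $\sum_{p > 10^8} 1/p^2 \le 7/(1.31\cdot 10^{10}) \approx 5.34 \times 10^{-10}$, with the factor of $r$ having already appeared linearly inside the exponent.

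This tail bound is the delicate point. The leading asymptotic $\sum_{p > N} 1/p^2 \sim 1/(N\log N)$ evaluated at $N = 10^8$ gives approximately $5.43 \times 10^{-10}$, so that $1.31 \times 5.43 \approx 7.11$ is \emph{just} over the target~$7$. Hence one cannot close the argument using only the leading-order asymptotic; a genuine negative subleading correction must be extracted. My plan is to apply Abel summation,
\[
\sum_{p > N} \frac{1}{p^2} = -\frac{\pi(N)}{N^2} + 2\int_N^\infty \frac{\pi(t)}{t^3}\,dt,
\]
insert a Rosser--Schoenfeld-type upper bound on $\pi(t)$ inside the integral and a matching lower bound on $\pi(N)$ in the boundary term, and observe that the $1/(N(\log N)^2)$ contributions from the two pieces partially cancel. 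This yields a rigorous bound strictly smaller than $1/(N\log N)$, sharp enough at $N = 10^8$ to give the required inequality.

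The main obstacle will be carrying out this cancellation cleanly enough to produce a verified numerical constant, since the margin between the crude asymptotic and the target is under $2\%$. A more robust alternative I would likely implement in practice is to pick a cutoff $M \gg 10^8$, evaluate $\sum_{10^8 < p \le M} 1/p^2$ exactly by sieving the primes up to $M$, and apply the partial-summation bound only on the much thinner tail $p > M$; with $M$ chosen large enough, the looseness on the $p > M$ piece becomes negligible and the required inequality reduces to a finite prime-enumeration computation confirming the constant~$7$.
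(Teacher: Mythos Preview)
Your overall strategy matches the paper's: apply Lemma~\ref{lem:dlem6.4} (valid since $r<5\times 10^7$ forces $2r<10^8$), use $1+x\le e^x$, and reduce everything to the numerical inequality $\sum_{p>10^8}1/p^2 < 7/(1.31\times 10^{10})$.

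Where you diverge is in how to certify that tail bound. You propose Abel summation with Rosser--Schoenfeld bounds on $\pi(t)$, correctly noting that the margin over the crude asymptotic $1/(N\log N)$ is under $2\%$ and that subleading cancellation must be tracked; you then fall back on a split at some $M\gg 10^8$ with exact sieving on $[10^8,M]$ plus a tail bound beyond $M$. Both would work, but the paper's route is much shorter: it simply uses the known high-precision value of the prime zeta function $P(2)=\sum_p 1/p^2 = 0.45224742004106\ldots$, computes the finite sum $\sum_{p\le 10^8}1/p^2$ directly, and subtracts, obtaining $\sum_{p>10^8}1/p^2 < 5.1616\times 10^{-10}$. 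Then $1.31\times 5.1616\times 10^{-10}<7\times 10^{-10}$ finishes the proof immediately. So the ``delicate'' step you flag is handled by a single subtraction rather than any partial-summation analysis; your approach is correct but more work than necessary.
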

\begin{proof}
    We follow closely the proof of Lemma 6.5 of \cite{deleglise}.  Set \[u=\prod_{p>10^8}\left(1+  \sum_{i>0} \frac{\rho_{p,r,i}}{p^i}\right).\]
    Now, $\ln u \leq 1.31r\sum_{p>10^8}\frac 1{p^2}$ by Lemma \ref{lem:dlem6.4}.  Thus, using computed values of $\sum_p \frac{1}{p^2}=0.45224742004106\ldots$, we compute that \[\sum_{p>10^8}\frac{1}{p^2}<5.1616\times 10^{-10},\]
    and so \[u<\exp\left(1.31\times 5.1616\times 10^{-10}\times r\right) < \exp\left(\tfrac{7r}{10^{10}}\right). \qedhere\]
\end{proof}
We can now use this and Lemma \ref{lem:dlem6.3} to bound 
\begin{align*}
    \mu_{y,r} = \prod\limits_{p\geq y}\left(1\, {+}\sum\limits_{i=1}^\infty\frac{\rho_{p,r,i}}{p^i}\right) < \exp\left(\tfrac{7r}{10^{10}}\right)\!\prod_{y\leq p<10^8} \raisebox{0.46ex}{$\displaystyle\left(\raisebox{-0.4ex}{$\displaystyle \! 1+ \frac{(1+1/p)^r-1}{p}+\frac{r\left(\frac{1}{1-1/p}\right)^{r-1}}{p^4-p^2}$}\right)$} \eqqcolon \widetilde{\mu}_{y,r}.
\end{align*}
For our computation, we precompute the values of $\widetilde{\mu}_{y,r}$ for all $y<Q$, the parameter defined in Section \ref{sec:upbdcomp} (which is always taken less than $10^8$), and each $r=2^i$ for $0\leq i \leq 25$.  

We then use the values $\widetilde{\mu}_{y,r}$ to compute the numerical bounds $\widetilde{A}_y(x)> A_y(x)$, starting from the bound \eqref{eq:Aymomentbd} for $A_y(x)$ as \begin{align*}
     \widetilde{A}_y(x) \coloneqq  \prod_{p< y}\left(1-\frac 1p\right) \min_{0\leq i\leq 25}\left\{ \frac{\widetilde{\mu}_{y,2^i} -1}{x^{2^i}-1}\right\}.
\end{align*}
The computations of the values of $\widetilde{\mu}_{y,r}$ and $\widetilde{A}_y(x)$ are performed with floating point arithmetic in C++ using the MPFR library with 80 bits of precision and consistent rounding to ensure that every operation is consistently rounded to produce an upper bound.

\section{Integer Programming} \label{app:ip}
In Section \ref{sec:Lowbd}, the final step of determining whether an integer $n$ is a covering number (if easier options have not been successful) is to set up an integer programming problem with constraints that are satisfiable if and only if $n$ is a covering number.  

We simplify the calculations substantially using Lemma \ref{lem:wlog-almost-cover}.  Let $\ell$ be the Sun-almost-covering divisor of $n$, with $n=\ell b$, and $\gcd(\ell,b)=1$.  Following the ideas used in the proof of Theorem \ref{thm:generalizedstirlingbd}, if the divisors of $n$ form a covering system we can assume, without loss of generality, that the divisors of $\ell$ cover every residue modulo $\ell$ except $0 \pmod{\ell}$.  So it remains to cover only the $m$ residues modulo $n$ in this congruence class modulo $\ell$ using divisors of $n$ not coprime to $b$.  By the same argument as that proof, we can now reduce the question of determining whether $n$ is a covering number to the problem of determining whether it is possible to find a covering system $C'$ in which each modulus is a divisor of $b$ (greater than 1) and each modulus occurs at most $\tau(\ell)$ times.  This is the question we now resolve using integer programming.

For each divisor $d|b$, $d>1$, we create $d$ integer (binary) variables,\begin{equation}
    0\leq x_{d,i} \leq 1  \ \text{ for }0\leq i <d \label{eq:binaryconstraint}
\end{equation}
which will take on the value 1 when the residue class $i \pmod{d}$ is covered using one of the divisors of $d$.  Since each divisor $d$ can appear at most $\tau(\ell)$ times we impose the constraint \begin{equation} \sum_{i=0}^{d-1}x_{d,i} \leq \tau(\ell) \text{\ \  for each }d|b, \ d>1.\label{eq:divisorconstraint}\end{equation}

We must also ensure that each residue modulo $b$ is covered by this system.  We check this by imposing one constraint for each residue $j$ modulo $b$, namely
\begin{equation}\sum_{\substack{d|b\\d>1}} x_{d,j\text{(mod $d$)}} \geq 1 \text{\ \  for each }0\leq j < b.\label{eq:residueconstraint}\end{equation}

After creating the model described by \eqref{eq:binaryconstraint}, \eqref{eq:divisorconstraint} and \eqref{eq:residueconstraint} with $\sigma(b)-1$ variables and with $\sigma(b)+\tau(b)+b-2$ constraints, we ask a MIP solver (in our case Gurobi) to solve the model.  If it returns a solution we can use that solution to construct a covering system using the divisors of $n$ and if it returns ``infeasible" then we know that $n$ is not a covering number.  

For example, if this method were to be used to test whether $n=60=2^2 \times 3 \times 5$ is a covering number, we have the Sun-almost-covering divisor $\ell = \ell(60)=4$, $b=15$. We would have 23 variables ($x_{3,0},x_{3,1},x_{3,2},x_{5_0},x_{5,1},\ldots, x_{15,14})$, a constraint $0\leq x_{i,j}\leq 1$ for each variable and the constraints

\begin{minipage}{0.5\textwidth}\begin{align*}
    x_{3,0} + x_{3,1} + x_{3,2} \leq 3 \\
    x_{5,0} + x_{5,1} + x_{5,2} + x_{5,3} + x_{5,4} \leq 3 \\
    x_{15,0} + x_{15,1} + x_{15,2} + \cdots + x_{15,14} \leq 3\\ 
    \end{align*}\end{minipage}
    \begin{minipage}{0.5\textwidth}\begin{align*}
    x_{3,0} + x_{5,0} + x_{15,0} &\geq 1\\
    x_{3,1} + x_{5,1} + x_{15,1} &\geq 1\\
    x_{3,2} + x_{5,2} + x_{15,2} &\geq 1\\
    x_{3,0} + x_{5,3} + x_{15,3} &\geq 1\\[-5pt] \vdots\ \ \ \ \ \ \ \ &\\[-3pt]
    x_{3,2} + x_{5,4} + x_{15,14} &\geq 1.\\
\end{align*}\end{minipage}
In this case there are many solutions, including both $x_{3,0}=x_{3,1}=x_{3,2}=1$ and all other variables zero, or $x_{3,1}=x_{3,2}=x_{5,0}=x_{5,1}=x_{15,3}=x_{15,9}=x_{15,12}=1$, all others zero.  Thus 60 is a covering number, however it isn't a primitive covering number as it is divisible by 12.

\renewcommand{\baselinestretch}{1} 
\section{Tables}
\footnotesize

\begin{table}[!htbp]
    \centering    
\begin{tabular}{c|c|c|c}
$x$ &Primitive covering numbers up & Primitive covering numbers up & $\mathcal{P}_{\mathcal{C}}(x)$\\

& to $x$ found using Theorem \ref{thm:sunPrimitive} &to $x$ found using Theorem \ref{thm:strongersun} &\\
\hline
  $10^2$   & 3 & 3&3\\
$10^3$   & 7 & 7&7\\ 
$10^4$   & 11 & 13&13\\
$10^5$   & 19 & 35&45\\
$10^6$   & 25 & 58 &94 \text{\,or\,} 95\\ 
$10^{10}$   & 56 & 362&\\
$10^{20}$   & 214 & 6544&\\
$10^{30}$   & 542 &67153&\\
$10^{40}$   & 949 & 473340&\\
$10^{50}$   & 1433& 2592765&\\
\end{tabular} \vspace{2mm}
\caption{Table of counts of primitive covering numbers and of the subsets of the primitive covering numbers that can be found using Theorems \ref{thm:sunPrimitive} and \ref{thm:strongersun}.}
    \label{tab:primcovcounts}
\end{table}

\footnotesize
\begin{table}[!htbp]
    \centering
    \renewcommand{\arraystretch}{0.975}
\begin{tabular}{c|c|c||c|c|c}
$n$ & Factors & Determination & $n$ & Factors & Determination \\
\hline
12 & $ 2^{2} \cdot 3 $ & Theorem \ref{thm:sunPrimitive} & 112112 & $ 2^{4} \cdot 7^{2} \cdot 11 \cdot 13 $ & MIP Solver \\
80 & $ 2^{4} \cdot 5 $ & Theorem \ref{thm:sunPrimitive} & 117306 & $ 2 \cdot 3^{2} \cdot 7^{3} \cdot 19 $ & Theorem \ref{thm:strongersun} \\
90 & $ 2 \cdot 3^{2} \cdot 5 $ & Theorem \ref{thm:sunPrimitive} & 120042 & $ 2 \cdot 3^{5} \cdot 13 \cdot 19 $ & Corollary \ref{cor:covfromalmostcov} \\
210 & $ 2 \cdot 3 \cdot 5 \cdot 7 $ & Theorem \ref{thm:sunPrimitive} & 131274 & $ 2 \cdot 3^{3} \cdot 11 \cdot 13 \cdot 17 $ & MIP Solver \\
280 & $ 2^{3} \cdot 5 \cdot 7 $ & Theorem \ref{thm:sunPrimitive} & 142002 & $ 2 \cdot 3^{2} \cdot 7^{3} \cdot 23 $ & Theorem \ref{thm:sunPrimitive} \\
378 & $ 2 \cdot 3^{3} \cdot 7 $ & Theorem \ref{thm:sunPrimitive} & 145314 & $ 2 \cdot 3^{5} \cdot 13 \cdot 23 $ & Theorem \ref{thm:strongersun} \\
448 & $ 2^{6} \cdot 7 $ & Theorem \ref{thm:sunPrimitive} & 192500 & $ 2^{2} \cdot 5^{4} \cdot 7 \cdot 11 $ & MIP Solver \\
1386 & $ 2 \cdot 3^{2} \cdot 7 \cdot 11 $ & Theorem \ref{thm:strongersun} & 208544 & $ 2^{5} \cdot 7^{3} \cdot 19 $ & Corollary \ref{cor:covfromalmostcov} \\
1650 & $ 2 \cdot 3 \cdot 5^{2} \cdot 11 $ & Theorem \ref{thm:sunPrimitive} & 223074 & $ 2 \cdot 3^{8} \cdot 17 $ & Theorem \ref{thm:sunPrimitive} \\
2200 & $ 2^{3} \cdot 5^{2} \cdot 11 $ & Theorem \ref{thm:sunPrimitive} & 242250 & $ 2 \cdot 3 \cdot 5^{3} \cdot 17 \cdot 19 $ & Corollary \ref{cor:covfromalmostcov} \\
2464 & $ 2^{5} \cdot 7 \cdot 11 $ & Theorem \ref{thm:strongersun} & 252448 & $ 2^{5} \cdot 7^{3} \cdot 23 $ & Theorem \ref{thm:sunPrimitive} \\
5346 & $ 2 \cdot 3^{5} \cdot 11 $ & Theorem \ref{thm:sunPrimitive} & 272272 & $ 2^{4} \cdot 7 \cdot 11 \cdot 13 \cdot 17 $ & MIP Solver \\
9750 & $ 2 \cdot 3 \cdot 5^{3} \cdot 13 $ & Theorem \ref{thm:sunPrimitive} & 293250 & $ 2 \cdot 3 \cdot 5^{3} \cdot 17 \cdot 23 $ & Corollary \ref{cor:covfromalmostcov} \\
11264 & $ 2^{10} \cdot 11 $ & Theorem \ref{thm:sunPrimitive} & 311168 & $ 2^{7} \cdot 11 \cdot 13 \cdot 17 $ & MIP Solver \\
11466 & $ 2 \cdot 3^{2} \cdot 7^{2} \cdot 13 $ & Theorem \ref{thm:strongersun} & 318500 & $ 2^{2} \cdot 5^{3} \cdot 7^{2} \cdot 13 $ & MIP Solver \\
13000 & $ 2^{3} \cdot 5^{3} \cdot 13 $ & Theorem \ref{thm:sunPrimitive} & 323000 & $ 2^{3} \cdot 5^{3} \cdot 17 \cdot 19 $ & Corollary \ref{cor:covfromalmostcov} \\
14994 & $ 2 \cdot 3^{2} \cdot 7^{2} \cdot 17 $ & Theorem \ref{thm:strongersun} & 369750 & $ 2 \cdot 3 \cdot 5^{3} \cdot 17 \cdot 29 $ & Theorem \ref{thm:strongersun} \\
18954 & $ 2 \cdot 3^{6} \cdot 13 $ & Theorem \ref{thm:sunPrimitive} & 385434 & $ 2 \cdot 3^{2} \cdot 7^{2} \cdot 19 \cdot 23 $ & Corollary \ref{cor:covfromalmostcov} \\
20384 & $ 2^{5} \cdot 7^{2} \cdot 13 $ & Corollary \ref{cor:covfromalmostcov} & 391000 & $ 2^{3} \cdot 5^{3} \cdot 17 \cdot 23 $ & Corollary \ref{cor:covfromalmostcov} \\
23166 & $ 2 \cdot 3^{4} \cdot 11 \cdot 13 $ & Corollary \ref{cor:covfromalmostcov} & 395250 & $ 2 \cdot 3 \cdot 5^{3} \cdot 17 \cdot 31 $ & Theorem \ref{thm:strongersun} \\
26656 & $ 2^{5} \cdot 7^{2} \cdot 17 $ & Theorem \ref{thm:strongersun} & 423500 & $ 2^{2} \cdot 5^{3} \cdot 7 \cdot 11^{2} $ & MIP Solver \\
27846 & $ 2 \cdot 3^{2} \cdot 7 \cdot 13 \cdot 17 $ & Theorem \ref{thm:strongersun} & 431250 & $ 2 \cdot 3 \cdot 5^{5} \cdot 23 $ & Theorem \ref{thm:sunPrimitive} \\
30294 & $ 2 \cdot 3^{4} \cdot 11 \cdot 17 $ & Theorem \ref{thm:strongersun} & 450846 & $ 2 \cdot 3^{4} \cdot 11^{2} \cdot 23 $ & Corollary \ref{cor:covfromalmostcov} \\
31122 & $ 2 \cdot 3^{2} \cdot 7 \cdot 13 \cdot 19 $ & Theorem \ref{thm:strongersun} & 452608 & $ 2^{11} \cdot 13 \cdot 17 $ & Corollary \ref{cor:covfromalmostcov} \\
33150 & $ 2 \cdot 3 \cdot 5^{2} \cdot 13 \cdot 17 $ & Theorem \ref{thm:strongersun} & 485982 & $ 2 \cdot 3^{2} \cdot 7^{2} \cdot 19 \cdot 29 $ & Theorem \ref{thm:strongersun} \\
33858 & $ 2 \cdot 3^{4} \cdot 11 \cdot 19 $ & Theorem \ref{thm:strongersun} & 493000 & $ 2^{3} \cdot 5^{3} \cdot 17 \cdot 29 $ & Theorem \ref{thm:strongersun} \\
36608 & $ 2^{8} \cdot 11 \cdot 13 $ & Jones and White \cite{Jones17} & 505856 & $ 2^{11} \cdot 13 \cdot 19 $ & Corollary \ref{cor:covfromalmostcov} \\
37050 & $ 2 \cdot 3 \cdot 5^{2} \cdot 13 \cdot 19 $ & Theorem \ref{thm:strongersun} & 519498 & $ 2 \cdot 3^{2} \cdot 7^{2} \cdot 19 \cdot 31 $ & Theorem \ref{thm:strongersun} \\
37674 & $ 2 \cdot 3^{2} \cdot 7 \cdot 13 \cdot 23 $ & Theorem \ref{thm:strongersun} & 527000 & $ 2^{3} \cdot 5^{3} \cdot 17 \cdot 31 $ & Theorem \ref{thm:strongersun} \\
44200 & $ 2^{3} \cdot 5^{2} \cdot 13 \cdot 17 $ & Corollary \ref{cor:covfromalmostcov} & 568458 & $ 2 \cdot 3^{4} \cdot 11^{2} \cdot 29 $ & Theorem \ref{thm:strongersun} \\
44850 & $ 2 \cdot 3 \cdot 5^{2} \cdot 13 \cdot 23 $ & Theorem \ref{thm:strongersun} & 575000 & $ 2^{3} \cdot 5^{5} \cdot 23 $ & Theorem \ref{thm:sunPrimitive} \\
49400 & $ 2^{3} \cdot 5^{2} \cdot 13 \cdot 19 $ & Theorem \ref{thm:strongersun} & 612352 & $ 2^{11} \cdot 13 \cdot 23 $ & Theorem \ref{thm:strongersun} \\
49504 & $ 2^{5} \cdot 7 \cdot 13 \cdot 17 $ & Corollary \ref{cor:covfromalmostcov} & 617526 & $ 2 \cdot 3^{2} \cdot 7 \cdot 13^{2} \cdot 29 $ & Theorem \ref{thm:strongersun} \\
53248 & $ 2^{12} \cdot 13 $ & Theorem \ref{thm:sunPrimitive} & 654500 & $ 2^{2} \cdot 5^{3} \cdot 7 \cdot 11 \cdot 17 $ & MIP Solver \\
53900 & $ 2^{2} \cdot 5^{2} \cdot 7^{2} \cdot 11 $ & MIP Solver & 660114 & $ 2 \cdot 3^{2} \cdot 7 \cdot 13^{2} \cdot 31 $ & Theorem \ref{thm:strongersun} \\
55328 & $ 2^{5} \cdot 7 \cdot 13 \cdot 19 $ & Corollary \ref{cor:covfromalmostcov} & 685216 & $ 2^{5} \cdot 7^{2} \cdot 19 \cdot 23 $ & Corollary \ref{cor:covfromalmostcov} \\
59800 & $ 2^{3} \cdot 5^{2} \cdot 13 \cdot 23 $ & Theorem \ref{thm:strongersun} & 731500 & $ 2^{2} \cdot 5^{3} \cdot 7 \cdot 11 \cdot 19 $ & MIP Solver \\
63750 & $ 2 \cdot 3 \cdot 5^{4} \cdot 17 $ & Theorem \ref{thm:sunPrimitive} & 735150 & $ 2 \cdot 3 \cdot 5^{2} \cdot 13^{2} \cdot 29 $ & Theorem \ref{thm:strongersun} \\
66976 & $ 2^{5} \cdot 7 \cdot 13 \cdot 23 $ & Theorem \ref{thm:strongersun} & 747954 & $ 2 \cdot 3^{9} \cdot 19 $ & Theorem \ref{thm:sunPrimitive} \\
71250 & $ 2 \cdot 3 \cdot 5^{4} \cdot 19 $ & Theorem \ref{thm:sunPrimitive} & \textcolor{gray}{\textit{773500}} & \textcolor{gray}{$ \mathit{2^{2} \cdot 5^{3} \cdot 7 \cdot 13 \cdot 17}$} & \textcolor{gray}{\textit{Unknown Status}}\\
72930 & $ 2 \cdot 3 \cdot 5 \cdot 11 \cdot 13 \cdot 17 $ & MIP Solver & 785850 & $ 2 \cdot 3 \cdot 5^{2} \cdot 13^{2} \cdot 31 $ & Theorem \ref{thm:strongersun} \\
85000 & $ 2^{3} \cdot 5^{4} \cdot 17 $ & Theorem \ref{thm:sunPrimitive} & 863968 & $ 2^{5} \cdot 7^{2} \cdot 19 \cdot 29 $ & Corollary \ref{cor:covfromalmostcov} \\
95000 & $ 2^{3} \cdot 5^{4} \cdot 19 $ & Theorem \ref{thm:sunPrimitive} & 885500 & $ 2^{2} \cdot 5^{3} \cdot 7 \cdot 11 \cdot 23 $ & MIP Solver \\
95744 & $ 2^{9} \cdot 11 \cdot 17 $ & Corollary \ref{cor:covfromalmostcov} & 896610 & $ 2 \cdot 3 \cdot 5 \cdot 11^{2} \cdot 13 \cdot 19 $ & MIP Solver \\
97240 & $ 2^{3} \cdot 5 \cdot 11 \cdot 13 \cdot 17 $ & MIP Solver & 909568 & $ 2^{8} \cdot 11 \cdot 17 \cdot 19 $ & MIP Solver \\
100100 & $ 2^{2} \cdot 5^{2} \cdot 7 \cdot 11 \cdot 13 $ & MIP Solver & 923552 & $ 2^{5} \cdot 7^{2} \cdot 19 \cdot 31 $ & Theorem \ref{thm:strongersun} \\ 
107008 & $ 2^{9} \cdot 11 \cdot 19 $ & Theorem \ref{thm:strongersun} & 980200 & $ 2^{3} \cdot 5^{2} \cdot 13^{2} \cdot 29 $ & Theorem \ref{thm:strongersun} \\
107406 & $ 2 \cdot 3^{5} \cdot 13 \cdot 17 $ & Corollary \ref{cor:covfromalmostcov} \\
\end{tabular}\vspace{2mm}
    \caption{All primitive covering numbers (and candidates) up to one million, and how this was determined. (The primitivity of those numbers listed as Corollary \ref{cor:covfromalmostcov} or MIP Solver was checked by noting that they weren't divisible by anything earlier in the list.) Note the status of 773500 is not known.}
    \label{tab:primcovnums}\vspace{-3mm}
\end{table}

\pagebreak

\bibliographystyle{amsplain}
\bibliography{bibliography}
\vspace{1cm}

\end{document}